\documentclass[hidelinks,onefignum,onetabnum]{siamart251216}

\usepackage{lipsum}
\usepackage{amsfonts}
\usepackage{graphicx}
\usepackage{epstopdf}
\usepackage{algorithmic}
\ifpdf
  \DeclareGraphicsExtensions{.eps,.pdf,.png,.jpg}
\else
  \DeclareGraphicsExtensions{.eps}
\fi

\newsiamremark{remark}{Remark}
\newsiamremark{hypothesis}{Hypothesis}
\crefname{hypothesis}{Hypothesis}{Hypotheses}
\newsiamthm{claim}{Claim}
\newsiamremark{fact}{Fact}
\crefname{fact}{Fact}{Facts}

\headers{FEM for Localized Nonlocal Problems}{Yuyan Chang, Hui Liang and Zhonghua Qiao}
\title{Finite Element Approximation of Nonlocal Problems with Heterogeneous Localization and Local Boundary Conditions}

\author{
Yuyan Chang\thanks{
School of Science, Harbin Institute of Technology (Shenzhen),
Shenzhen 518055, China;
Department of Applied Mathematics,
The Hong Kong Polytechnic University,
Hung Hom, Kowloon, Hong Kong, China
(\email{yuyan.chang@connect.polyu.hk}).
}
\and
Hui Liang\thanks{
Corresponding author.
School of Science, Harbin Institute of Technology (Shenzhen),
Shenzhen 518055, China
(\email{lianghui@hit.edu.cn}).
}
\and
Zhonghua Qiao\thanks{
Department of Applied Mathematics,
The Hong Kong Polytechnic University,
Hung Hom, Kowloon, Hong Kong, China
(\email{zhonghua.qiao@polyu.edu.hk}).
}
}

\usepackage{amsopn}

\ifpdf
\hypersetup{
  pdftitle={Finite Element Approximation of Nonlocal Problems with Heterogeneous Localization and Local Boundary Conditions},
  pdfauthor={Yuyan Chang, Hui Liang and Zhonghua Qiao}
}
\fi

\newtheorem{assumption}[theorem]{Assumption}
\begin{document}
\maketitle
\begin{abstract}
This paper studies the finite element approximation of a one-dimensional nonlocal Poisson problem with heterogeneous localization and homogeneous local Dirichlet boundary conditions. These local boundary conditions induce localization kernels with spatially varying interaction neighborhoods, which lead to substantial numerical challenges for the assembly of the singular nonlocal stiffness matrix. An asymptotically compatible conforming finite element method is developed for the variational formulation, together with an exact geometric decomposition for
the singular stiffness matrix assembly. Under
additional smoothness assumptions on the localization profile, second-order operator consistency is established and error estimates are derived with the corresponding convergence orders. Numerical experiments confirm the theoretical convergence behavior and demonstrate the improved boundary behavior of the heterogeneous localization model.
\end{abstract}
\begin{keywords}
nonlocal diffusion model, heterogeneous localization, finite element method, asymptotic compatibility, convergence analysis
\end{keywords}
\begin{MSCcodes} 
65N30, 65N12, 65R20, 46N20, 45A05
\end{MSCcodes}
\section{Introduction}
Nonlocal models describe phenomena governed by finite-range interactions. They have been widely used in anomalous transport, peridynamics, and multiscale material response
\cite{bobaru2009convergence,du2012analysis,du2018peridynamic,d2020numerical,silling2000reformulation,silling2010peridynamic}. In contrast to classical differential models, nonlocal models can capture discontinuities and interface effects more naturally through finite-range interactions. A substantial body of work has studied numerical approximations of
volume-constrained nonlocal diffusion and peridynamic models, including finite element and discontinuous Galerkin methods, meshfree and collocation schemes, and quadrature-based discretizations
\cite{aulisa2022efficient,d2020numerical,d2021cookbook,macek2007peridynamics,silling2005meshfree,tao2017nonlocal,tian2013analysis,tian2015nonconforming}. For nonlocal problems on bounded domains, these classical formulations usually prescribe boundary conditions through volumetric constraints on an exterior interaction region \cite{gunzburger2010nonlocal}. This gives a nonlocal counterpart of classical boundary conditions, but it requires data outside the physical domain and may lead to boundary regularity and consistency issues \cite{du2012analysis,tao2017nonlocal}. Various alternatives have therefore been proposed to incorporate classical surface data, including artificial boundary extensions, modified boundary equations, fictitious- or mirror-node constructions, and variable-horizon treatments \cite{behera2022imposition,prudhomme2020treatment,zhao2025enforcing}.

A more systematic model formulation for incorporating classical boundary data is based on heterogeneous localization, where the interaction varies spatially and vanishes at the boundary \cite{d2022review,seleson2013interface,silling2015variable}. This localization strategy was further developed into a mathematical analytical framework, including trace theorems, Hardy-type inequalities, variational principles, Green's identities, and regularity theory \cite{du2022fractional,scott2024nonlocal,scott2025nonlocal,tian2017trace}.
 Tao, Tian, and Du implemented this idea through spatially varying horizons and studied local Dirichlet and Neumann conditions, including auxiliary-function and smooth-horizon treatments for inhomogeneous conditions \cite{tao2019nonlocal}. Recently, asymptotically compatible Galerkin frameworks provide a basis for discretizations that are convergent for both nonlocal models and their local limits \cite{du2026asymptotically}.

Turning the heterogeneous localization framework into a numerical method is not a straightforward extension of volume-constrained discretizations. Classical volume-constrained models usually have translation-invariant kernels and fixed interaction geometries, whereas the localized kernel considered here has base-point interaction support and shrinks near the boundary. In a finite element discretization, this variable interaction geometry enters directly into the bilinear form and the stiffness matrix. Each entry of the stiffness matrix is a singular double integral over an interaction region determined by both the basis-function supports and the singular kernel. This structure prevents a direct use of standard quadrature and motivates the geometric semi-analytical assembly strategy developed in this work. The difficulties become greater in two dimensions because of the more complicated interaction geometry. We therefore first carry out a complete implementation and analysis in one space dimension. To the best of our knowledge, this is the first detailed numerical realization of the heterogeneous localization framework developed in \cite{scott2024nonlocal,scott2025nonlocal}, combining an
exact geometric decomposition with semi-analytical stiffness-matrix
assembly, quantitative operator-consistency analysis, and finite
element error estimates.

In this paper, we study a one-dimensional linear nonlocal Poisson
problem with a heterogeneous horizon and homogeneous classical
Dirichlet boundary conditions. We use the variational and trace
framework developed for nonlocal problems with local boundary
conditions and construct a conforming piecewise-linear finite element approximation. The main contributions of this paper are as follows:
\begin{itemize}
    \item We develop a conforming finite element scheme for the heterogeneously localized nonlocal Poisson problem with homogeneous local Dirichlet boundary conditions.

    \item We derive a geometric and semi‑analytical procedure for assembling the singular nonlocal stiffness matrix associated with spatially varying interaction neighborhoods.

    \item We prove second-order consistency of the nonlocal operator with the local limit, derive the corresponding variational consistency estimate, and obtain error estimates.

    \item We present numerical experiments that confirm the expected convergence behaviors, illustrate the improved boundary behavior enabled by heterogeneous localization.
\end{itemize}

The remainder of the paper is organized as follows. Section~2 introduces the heterogeneous localization setting, the associated function spaces, and the variational formulation. Section~3 presents the finite element discretization and the geometric matrix‑assembly procedure. 
Section~4 establishes consistency and finite element error estimates. Section~5 reports the numerical experiments. The paper concludes with Section~6.

\section{Model setting and variational formulation}
\label{sec:model-setting-and-variational-formulation}

\subsection{The one-dimensional localized nonlocal Poisson problem}
\label{subsec:nonlocal-poisson-problem}
Throughout the paper, we consider the one-dimensional domain $\Omega=(0,1)$, $f\in L^2(\Omega)$, $\beta\in(0,3)$, and $(\cdot,\cdot)$ denotes the inner product in $L^2(\Omega)$.

Let $\lambda:\overline{\Omega}\to[0,\infty)$ be a generalized distance
function and let $q:[0,\infty)\to[0,\infty)$ be a localization profile. We set
\begin{equation}
\label{eq:eta-definition}
\eta(x):=q(\lambda(x)),
\qquad
\eta_\delta(x):=\delta\eta(x)
=\delta q(\lambda(x)).
\end{equation}
\begin{assumption}
\label{ass:localization}
The functions $q$, $\lambda$ and the horizon parameter $\delta$ satisfy the following conditions.
\begin{enumerate}
\item
The function $q\in C^3([0,\infty))$ satisfies
\[
q(0)=q'(0)=0,
\qquad
q''(0)>0,
\]
and
\[
0<q(r)\le r,\qquad r>0,
\]
\[
0\le q'(r)\le1,
\qquad
0\le q''(r)\le1,
\qquad r\ge0.
\]
Moreover, $q''\in C_b^1([0,\infty))$, and there exists
$C_q\ge1$ such that
\[
q(2r)\le C_q q(r),
\qquad r>0.
\]
\item
The function $\lambda\in C^0(\overline{\Omega})\cap C^2(\Omega)$ satisfies the following conditions. There exist constants
$\kappa_j>0$, $j=0,1,2$, with $\kappa_0\ge1$, such that
\[
\kappa_0^{-1}d(x)
\le \lambda(x)
\le \kappa_0 d(x),
\qquad x\in\overline{\Omega},
\]
\[
|\lambda(x)-\lambda(y)|
\le \kappa_1|x-y|,
\qquad x,y\in\Omega,
\]
and
\[
|\lambda^{(j)}(x)|
\le
\kappa_j d(x)^{1-j},
\]
where $d(x):=\operatorname{dist}(x,\partial\Omega)$.
Moreover, the boundary-normal compatibility condition
\[
\lambda'(0^+)=1,
\qquad
\lambda'(1^-)=-1
\]
is imposed.
\item
The horizon parameter satisfies
\[
0<\delta<\min\{\delta_0,\bar{\delta}_0\},
\]
where
\[
\delta_0
=
\left(
3\max\left\{
1,\kappa_1,C_q\kappa_0^{\log_2 C_q}
\right\}
\right)^{-1},
\]
and $\bar{\delta}_0$ is the smallest positive root of \(
\frac{\delta(1+\kappa_1\delta)}
     {(1-\kappa_1\delta)^2}
=
\frac13.
\)
\end{enumerate}
\end{assumption}

Since $q'(0)=0$ and $q''\in L^\infty([0,\infty))$,
\[
|q'(\lambda(x))|
\le \|q''\|_{L^\infty}\lambda(x)
\le Cd(x).
\]
Hence, by the chain rule and the estimates on $\lambda$,
\[
|\eta'(x)|\le Cd(x),
\qquad
|\eta''(x)|\le C.
\]
Consequently,
\[
\eta=q\circ\lambda\in W^{2,\infty}(\Omega).
\]

\subsection{Energy space and weak problem}
\label{subsec:variational-formulation}

We mainly consider the variational formulation of the nonlocal Dirichlet problem. Let
\begin{equation}
    C_\beta:=\frac{3-\beta}{2},
    \qquad
    w_\delta(x):=
    \frac{C_\beta}{\eta_\delta(x)^{3-\beta}}.
    \label{eq:weight}
\end{equation}
This normalization is chosen so that $C_\beta\int_{-1}^{1}|t|^{2-\beta}\,dt=1$.
Define
\begin{equation}
\label{eq:energy-space}
\mathfrak W^{\beta,2}[\delta;q](\Omega)
:=
\left\{
u\in L^2(\Omega):\
[u]_{\mathfrak W^{\beta,2}[\delta;q](\Omega)}<\infty
\right\},
\end{equation}
where the seminorm is
\begin{equation}
\label{eq:seminorm}
[u]^2_{\mathfrak W^{\beta,2}[\delta;q](\Omega)}
:=
\int_0^1\int_0^1
w_\delta(x)\mathbf 1_{\{|y-x|<\eta_\delta(x)\}}
\frac{|u(x)-u(y)|^2}{|x-y|^\beta}
\,dy\,dx .
\end{equation}
Equipped with the norm
\begin{equation}
\label{eq:nonlocal_energynorm}
\|u\|_{\mathfrak W^{\beta,2}[\delta;q](\Omega)}
:=
(\|u\|^2_{L^2(\Omega)}
+
[u]^2_{\mathfrak W^{\beta,2}[\delta;q](\Omega)})^{\frac{1}{2}},
\end{equation}
the space \(\mathfrak W^{\beta,2}[\delta;q](\Omega)\) is a Hilbert space \cite{scott2024nonlocal,scott2025nonlocal}. We use the standard notation \(H^m(\Omega)\) and \(H_0^1(\Omega)\) for classical Sobolev spaces. 

By the trace theorem of \cite[Theorem~1.3]{scott2024nonlocal}, the trace map \(T\) extends continuously to \(\mathfrak W^{\beta,2}[\delta;q](\Omega)\). We define
\begin{equation}
\mathfrak W_0^{\beta,2}[\delta;q](\Omega)
:=
\left\{
u\in\mathfrak W^{\beta,2}[\delta;q](\Omega):
Tu=0\ \text{on }\partial\Omega
\right\}.
\label{eq:zero-trace-space}
\end{equation}
By \cite[Theorem~5.6]{scott2024nonlocal}, this space coincides with the closure of \(C_c^1(\Omega)\) in \(\mathfrak W^{\beta,2}[\delta;q](\Omega)\). The bilinear form is
\begin{equation}
\label{eq:weakproblem_poi}
\mathcal B_\delta(u,v)
:=
\int_0^1\int_0^1
w_\delta(x)\mathbf 1_{\{|y-x|<\eta_\delta(x)\}}
\frac{(u(x)-u(y))(v(x)-v(y))}{|x-y|^\beta}
\,dy\,dx .
\end{equation}
For $0<\beta<3$, by the Cauchy–Schwarz inequality with respect to the nonlocal energy measure, $\mathcal B_\delta$ is well-defined and continuous on
$\mathfrak W^{\beta,2}[\delta;q](\Omega)$ by construction. Moreover, \(   \mathcal  B_\delta(v,v)=    [v]_{\mathfrak W^{\beta,2}[\delta;q](\Omega)}^2.
\)

The nonlocal homogeneous Dirichlet problem is: find $u_\delta\in \mathfrak W_0^{\beta,2}[\delta;q](\Omega)$ such that
\begin{equation}
   \mathcal B_\delta(u_\delta,v)
    =
    (f,v)
    \qquad
    \text{for every }v\in \mathfrak W_0^{\beta,2}[\delta;q](\Omega).
    \label{eq:nonlocal-weak-problem}
\end{equation}
The nonlocal Poincar\'e inequality of \cite[Theorem~5.7]{scott2024nonlocal} is uniform for $0<\delta<\delta_0$. Hence $\mathcal B_\delta$ is coercive on $\mathfrak W_0^{\beta,2}[\delta;q](\Omega)$, and the Lax--Milgram theorem gives a unique solution of \eqref{eq:nonlocal-weak-problem}.

\subsection{Realized strong operator}
\label{subsec:operator-realization}

The formal operator associated with \(\mathcal B_\delta\) is
\begin{equation}
\label{eq:formal-operator}
\widehat{\mathcal L}_\delta u(x)
:=
\int_0^1
\left(
w_\delta(x)\mathbf 1_{\{|x-y|<\eta_\delta(x)\}}
+
w_\delta(y)\mathbf 1_{\{|x-y|<\eta_\delta(y)\}}
\right)
\frac{u(x)-u(y)}{|x-y|^\beta}
\,dy .
\end{equation}
For \(\varepsilon>0\), let
\begin{equation}
\label{eq:truncated-operator}
\begin{aligned}
\mathcal L_\delta^\varepsilon u(x)
:={}&
\mathbf 1_{\{\eta(x)>\varepsilon\}}
\int_0^1
\Big[
w_\delta(x)
\mathbf 1_{\{
\varepsilon\eta_\delta(x)<|x-y|<\eta_\delta(x)
\}}
\\
&\qquad\qquad
+
w_\delta(y)
\mathbf 1_{\{
\varepsilon\eta_\delta(y)<|x-y|<\eta_\delta(y)
\}}
\Big]
\frac{u(x)-u(y)}{|x-y|^\beta}
\,dy .
\end{aligned}
\end{equation}
 We therefore define \(\mathcal L_\delta\) through the hard-cutoff limit
of the truncated operators. The existence of this realization and its
relation to the bilinear form are established in the following proposition.

\begin{proposition}[Hard-cutoff realization]
\label{prop:hard-cutoff-realization}
Under Assumption~\ref{ass:localization}, for every \(u\in C^2(\overline\Omega)\) and each fixed \(\delta\), the limit
\begin{equation}
\label{eq:limit-operator}
\mathcal L_\delta u
:=
\lim_{\varepsilon\rightarrow0}
\mathcal L_\delta^\varepsilon u
\qquad\text{in }L^2(\Omega)
\end{equation}
exists. Moreover,
\begin{equation}
\label{eq:nonlocal-green}
\mathcal B_\delta(u,v)
=
(\mathcal L_\delta u,v)
\qquad
\text{for every }
v\in\mathfrak W_0^{\beta,2}[\delta;q](\Omega).
\end{equation}
\end{proposition}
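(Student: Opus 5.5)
Split the truncated operator into its two pieces, $\mathcal L_\delta^\varepsilon u=\mathcal L^{\varepsilon}_{1}u+\mathcal L^{\varepsilon}_{2}u$. Here $\mathcal L^{\varepsilon}_{1}$ carries the weight $w_\delta(x)$ and $\mathcal L^{\varepsilon}_{2}$ carries the weight $w_\delta(y)$.

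\textbf{Convergence of the first piece.} For the $x$-based piece, Taylor expand $u(y)=u(x)+u'(x)(y-x)+R$ with $|R|\le \|u''\|_\infty|y-x|^2/2$.

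For $\eta_\delta(x)<d(x)$, the window $\{\varepsilon\eta_\delta(x)<|x-y|<\eta_\delta(x)\}$ lies inside $\Omega$ and is symmetric about $x$. Assumption~\ref{ass:localization} gives $\eta_\delta(x)\le\delta\kappa_0 d(x)$, and $\delta$ is small enough that this holds. The linear term therefore cancels exactly. The remainder contributes at most
\[
C w_\delta(x)\int_{|h|<\eta_\delta(x)}|h|^{2-\beta}\,dh\le C\|u''\|_\infty,
\]
using the normalization of $C_\beta$. Hence the integrand has an integrable majorant $C|h|^{2-\beta}w_\delta(x)$ on the full interaction set. Dominated convergence gives pointwise convergence as $\varepsilon\to0$, with a uniform bound $C\|u''\|_\infty$ in $x$, and therefore convergence in $L^2(\Omega)$. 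The factor $\mathbf 1_{\{\eta(x)>\varepsilon\}}\to1$ a.e. because $\eta>0$ in $\Omega$.

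\textbf{Convergence of the second piece.} For the $y$-based piece, the interaction set is $\{y:\varepsilon\eta_\delta(y)<|x-y|<\eta_\delta(y)\}$, which is not symmetric about $x$. I would use the Lipschitz bound $|\eta_\delta(y)-\eta_\delta(x)|\le \delta C d(x)|x-y|$ together with the smallness of $\delta$ (the $\bar\delta_0$ condition). With these, this set is contained in a two-sided interval of length comparable to $\eta_\delta(x)$, and its endpoints are perturbations of $x\pm\eta_\delta(x)/(1\mp\cdots)$ of relative size $O(\delta)$.

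Write $w_\delta(y)=w_\delta(x)+O\bigl(|x-y|\,\|\eta'\|/\eta_\delta(x)^{4-\beta}\bigr)$. Then the first-order term $u'(x)(x-y)$ is integrated against a nearly symmetric measure. The asymmetry has three sources:
\begin{itemize}
\item the variation of $w_\delta$;
\item the mismatch of the outer endpoints, which is bounded since $|h|^{1-\beta}$ times the mismatch $\sim\eta_\delta^{2-\beta}\cdot\delta|\eta'|$ times $w_\delta$ is finite;
\item the mismatch of the inner $\varepsilon$-endpoints, of size $\varepsilon\eta_\delta(x)\cdot O(\delta\eta')$, which contributes $w_\delta(x)\,(\varepsilon\eta_\delta)^{1-\beta}\,\varepsilon\eta_\delta\,\delta|\eta'|\to0$ for $\beta<3$.
\end{itemize}
In each case one checks that, after the principal-value cancellation, the integrand in $y$ is $O(|x-y|^{1-\beta}\eta_\delta^{-(3-\beta)}\eta')$ or better. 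Since $\beta<3$ this is integrable up to the singularity only when $\beta<2$; for $2\le\beta<3$ one must genuinely pair $y=x\pm h$ and use the symmetric difference. I expect this pairing, done uniformly in $x$ including near $\partial\Omega$, to be the main obstacle. I would handle it by changing variables $y=x+h$, then writing the indicator of $|h|<\eta_\delta(x+h)$ as a perturbation of $|h|<\eta_\delta(x)$ via the implicit-function bound (invertibility of $h\mapsto h/\eta_\delta(x+h)$, guaranteed by $\kappa_1\delta<1/3$). The resulting terms are bounded by $C\|u\|_{C^2}(1+|\eta'(x)|/\eta(x))$. Because $|\eta'|\le Cd$ and $\eta\gtrsim d^2$, this blow-up is like $1/d$, which is not in $L^2$. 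So the bound has to be sharpened using $q'(\lambda)\lambda'/q(\lambda)\cdot$ the extra factor $\eta_\delta$ coming from the endpoint mismatch, which gives $O(\delta)$ boundedness. I would verify this carefully and then conclude, again by dominated convergence, that the limit exists in $L^2$.

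\textbf{The Green identity.} First take $v\in C_c^1(\Omega)$. On $\operatorname{supp}v$ we have $\eta$ bounded below, so for small $\varepsilon$ the cutoff $\mathbf 1_{\{\eta>\varepsilon\}}$ equals $1$ there. Let $\mathcal B_\delta^\varepsilon$ be the truncated form, with kernel restricted to $\varepsilon\eta_\delta(x)<|x-y|$. It is absolutely convergent, so Fubini and the swap $x\leftrightarrow y$ give
\[
\mathcal B_\delta^\varepsilon(u,v)=(\mathcal L_\delta^\varepsilon u,v).
\]
This holds because the symmetrized kernel in \eqref{eq:truncated-operator} is exactly what appears when the $v(x)-v(y)$ product is split into its two halves. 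Letting $\varepsilon\to0$, the left side tends to $\mathcal B_\delta(u,v)$ by dominated convergence, since $u$ and $v$ have finite energy. The right side tends to $(\mathcal L_\delta u,v)$ by the $L^2$ convergence established above.

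To extend to general $v\in\mathfrak W_0^{\beta,2}[\delta;q](\Omega)$, use the density of $C_c^1(\Omega)$ from \cite[Theorem~5.6]{scott2024nonlocal}. Both sides are continuous in $v$: the left side by continuity of $\mathcal B_\delta$ in the energy norm, and the right side because $\mathcal L_\delta u\in L^2(\Omega)$. This proves \eqref{eq:nonlocal-green}.
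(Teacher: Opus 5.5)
Your $x$-weighted half is handled correctly. Your Green-identity step is also sound: the truncated form, Fubini with the swap $x\leftrightarrow y$, and then density of $C_c^1(\Omega)$. That is a legitimate, self-contained alternative to the paper's route, which replaces the cutoff by $\rho_\tau=\mathbf 1_{\{|t|<1-\tau\}}$ and imports the realization, the Green identity and $\tau$-uniform operator bounds from Scott--Du. The gap is the $y$-weighted half, and it carries the whole existence claim. You need a bound on it that is square-integrable up to $\partial\Omega$ uniformly in $\varepsilon$, but you only reach $C\|u\|_{C^2}(1+|\eta'|/\eta)\sim d^{-1}$. The sharpening you propose does not exist. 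The outer endpoints of $I_x=\{h:|h|<\eta_\delta(x+h)\}$ are displaced by $\eta_\delta(x)\eta_\delta'(x)$ to leading order. The integrand there has size $C_\beta|u'(x)|\eta_\delta(x)^{-2}$. So the endpoint term equals $-2C_\beta u'(x)\eta_\delta'(x)/\eta_\delta(x)$ plus higher order, which near $x=0$ is about $-4C_\beta u'(x)/x$, independent of $\delta$. There is no further factor $\eta_\delta$ to extract. Your earlier remark that this mismatch is ``bounded'' holds only pointwise, and the weight-variation term is of the same size.

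What actually makes this half bounded is an exact first-order cancellation between these two sources. Write $w_\delta(x+h)=w_\delta(x)\bigl(1-(3-\beta)h\,\eta_\delta'(x)/\eta_\delta(x)+\cdots\bigr)$. Testing the weight variation against $-u'(x)h|h|^{-\beta}$ over $|h|<\eta_\delta(x)$ gives $+2C_\beta u'(x)\eta_\delta'(x)/\eta_\delta(x)$, which cancels the endpoint-transport term. What survives is controlled by $\|u''\|_\infty$ together with $(\eta_\delta')^2/\eta_\delta+|\eta_\delta''|$. These are bounded under Assumption~\ref{ass:localization}, because $|q'|^2\le Cq$ on the range of $\lambda$ and $\eta\in W^{2,\infty}(\Omega)$.

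The same cancellation is needed on the inner hole $\{|h|<\varepsilon\eta_\delta(x+h)\}$, which by scaling is $\varepsilon^{3-\beta}$ times the same structure. Its endpoint displacement is $\varepsilon^2\eta_\delta\eta_\delta'$. Your value $\varepsilon\eta_\delta\cdot\delta\eta'$ would give $\varepsilon^{2-\beta}|\eta_\delta'|/\eta_\delta$, which does not vanish for $\beta>2$. Even the corrected crude bound $\varepsilon^{3-\beta}|\eta_\delta'|/\eta_\delta$ on $\{\eta>\varepsilon\}$ fails to be an $L^2$ majorant uniform in $\varepsilon$ once $\beta\ge 11/4$. This is exactly the $Q_{\rm tr}$/$R_\omega$ mechanism of Proposition~\ref{prop:LminusLtilde}.

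Until you carry it out, you have neither $\mathcal L_\delta u\in L^2(\Omega)$ nor $L^2$ convergence. Your density extension of the Green identity to all of $\mathfrak W_0^{\beta,2}[\delta;q](\Omega)$ relies on $\mathcal L_\delta u\in L^2(\Omega)$, so it is not closed either.
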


\begin{proof}
For \(0<\tau<1/2\), replace the endpoint cutoff \(\mathbf 1_{\{|t|<1\}}\) by
\[
\rho_\tau(t):=\mathbf 1_{\{|t|<1-\tau\}},
\]
and denote the resulting form and operators by \(\mathcal B_{\delta,\tau}\), \(\mathcal L_{\delta,\tau}^{\varepsilon}\), and \(\mathcal L_{\delta,\tau}\). Since
\[
\operatorname{supp}\rho_\tau
\subset[-1+\tau,1-\tau],
\]
the support is compactly contained in \((-1,1)\). Hence \cite[Theorem~1.1 and Corollary~5.3]{scott2025nonlocal} applies for every fixed \(\tau\).

The bounds in \cite[Lemmas~5.1--5.2]{scott2025nonlocal} are uniform for \(0<\tau<1/2\): the kernels are bounded by the endpoint cutoff and coincide on the common inner core \(\{|t|<1/2\}\). Consequently,
\[
\lim_{\varepsilon\rightarrow0}
\sup_{0<\tau<1/2}
\left\|
\mathcal L_{\delta,\tau}^{\varepsilon}u
-
\mathcal L_{\delta,\tau}u
\right\|_{L^2(\Omega)}
=0.
\]
For fixed \(\varepsilon>0\), the diagonal singularity is absent, and
dominated convergence gives
\[
\mathcal L_{\delta,\tau}^{\varepsilon}u
\longrightarrow
\mathcal L_\delta^\varepsilon u
\qquad\text{in }L^2(\Omega)
\quad\text{as }\tau\rightarrow0.
\]
Interchanging the two limits by these estimates proves the existence of \eqref{eq:limit-operator}.

For fixed \(\tau\), the Green identity of \cite[Theorem~1.1]{scott2025nonlocal} and the condition \(Tv=0\) give
\[
\mathcal B_{\delta,\tau}(u,v)
=
(\mathcal L_{\delta,\tau}u,v).
\]
Letting \(\tau\rightarrow0\), using dominated convergence for the form and the preceding \(L^2\)-convergence for the operator, proves
\eqref{eq:nonlocal-green}.
\end{proof}

\begin{remark}
For \(0<\beta<2\), the realized operator agrees almost everywhere
with the absolutely convergent expression
\eqref{eq:formal-operator}; this follows from the argument of
\cite[Corollary~5.4]{scott2025nonlocal}, together with the
hard-cutoff approximation above. For \(2\le\beta<3\),
\eqref{eq:formal-operator} is only a formal representation, and
\(\mathcal L_\delta\) always denotes the realization
\eqref{eq:limit-operator}.
\end{remark}

The variational formulation \eqref{eq:nonlocal-weak-problem} is the primary definition of the nonlocal Dirichlet problem and requires no additional regularity.
If $u\in C^2(\overline\Omega)\cap
\mathfrak W_0^{\beta,2}[\delta;q]$, the nonlocal Green identity and the density of \(C_c^\infty(\Omega)\) in \(L^2(\Omega)\) show that the variational problem is equivalent to \(\mathcal L_\delta u=f\) almost everywhere, with \(Tu=0\) encoded by membership in the zero-trace space. This conditional identification is not used in the variational existence theory.

\section{Finite element approximation and stiffness matrix assembly}
\label{sec:numerical-method} 
In this section, we introduce the conforming finite element discretization and the assembly procedure for the resulting nonlocal stiffness matrix. 
\subsection{Conforming finite element discretization}
\label{subsec:fem-discretization}
Let \(N\ge 1\) be an integer, set \(h:=1/N\), and define the grid points \(x_i:=ih\), \(i=0,\ldots,N\). Let
\[
\mathcal K_h:=\{\tau_i=(x_{i-1},x_i]: i=1,\ldots,N\}
\]
be the associated uniform partition of \(\Omega=(0,1)\). We define the conforming piecewise linear finite element space
\[
V_h
:=
\left\{
v_h\in C^0(\overline{\Omega}):
v_h|_{\tau_i}\in \mathbb P_1(\tau_i)\ \text{for all } \tau_i\in\mathcal K_h,
\quad
v_h(0)=v_h(1)=0
\right\}.
\]
Here \(\mathbb P_1(\tau_i)\) denotes the space of polynomials on \(\tau_i\) of degree 1. It is clear that,
\[
V_h\subset H_0^1(\Omega)\cap \mathfrak W_0^{\beta,2}[\delta;q](\Omega).
\]
 The conforming finite element approximation is to find \(u_{\delta,h}\in V_h\) such that
\begin{equation}
\label{eq:discrete-weak-problem}
\mathcal B_\delta(u_{\delta,h},v_h)=(f,v_h)
\qquad
\forall v_h\in V_h .
\end{equation}
We replace  $\mathfrak W_0^{\beta,2}[\delta;q](\Omega)$ with this conforming finite-dimensional subspace \(V_h\).  Let \(\{\phi_i\}_{i=1}^{N_h}\) be the nodal basis of \(V_h\), 
\[
\phi_i(x)=
\begin{cases}
\dfrac{x-x_{i-1}}{h}, & x\in \tau_i,\\[6pt]
\dfrac{x_{i+1}-x}{h}, & x\in \tau_{i+1},\\[6pt]
0, & \text{otherwise},
\end{cases}
\] and define $\phi_{i,1}=\phi_i|_{\tau_i}$ and $\phi_{i,2}=\phi_i|_{\tau_{i+1}}$.
The stiffness matrix $A=(A_{ij})_{i,j=1}^{N_h}\in \mathbb{R}^{N_h\times N_h}$ is defined by
\begin{equation}\label{eq:stiffness-matrix-compact}
A_{ij}
:=
\int_0^1\int_{-\eta_\delta(x)}^{\eta_\delta(x)}
w_\delta(x)\frac{(\phi_i(x)-\phi_i(x+s))(\phi_j(x)-\phi_j(x+s))}{|s|^\beta}
\,ds\,dx.
\end{equation}
The rest of this section develops an exact geometric decomposition for assembling the stiffness matrix \(A\).
\subsection{Geometric decomposition of the stiffness matrix}
\label{subsec:matrix-assembly}
Due to the spatially varying horizon and singular kernel, standard quadrature-based assembly strategies for nonlocal finite element discretizations~\cite{aulisa2022efficient,d2020numerical,d2021cookbook} cannot be directly applied. We assemble the matrix entries according to their diagonal, near-diagonal, and off-diagonal structures, evaluating the inner singular integral analytically and the outer integral by numerical quadrature.

Every interior matrix entry is assembled as a sum of semi-analytical contributions:
\begin{equation}\label{eq:general_Aij}
A_{ij}
=
\sum_{r=1}^{M_{ij}}
\int_{\Omega_{ij}^{(r)}}
w_{\delta}(x)\,
\mathcal I\!\left(
a_{ij}^{(r)}(x), b_{ij}^{(r)}(x);
\mathbf c_{ij,+}^{(r)}(x), \mathbf c_{ij,-}^{(r)}(x)
\right)\,dx .
\end{equation}
Here $\Omega_{ij}^{(r)}$ denotes a base-point subregion, and $a_{ij}^{(r)}(x)$ and $b_{ij}^{(r)}(x)$ are the signed endpoints of the corresponding inner integration interval in the variable $s=y-x$. The vectors $\mathbf c_{ij,+}^{(r)}(x)$ and $\mathbf c_{ij,-}^{(r)}(x)$ are the polynomial coefficient vectors on the right and left sides of the singular point $s=0$. 

Here, we introduce important definitions related to \eqref{eq:general_Aij}. For $s>0$ and $\mathbf c=(c_0,c_1,c_2)$, set $\mathcal P_1(s;\mathbf c) : =c_0\mathcal F(-\beta,s)+c_1\mathcal F(1-\beta,s)+c_2\mathcal F(2-\beta,s)$,
where
\[
\mathcal F(\alpha,s)
:=
\begin{cases}
\dfrac{s^{\alpha+1}}{\alpha+1}, & \alpha\ne -1,\\[6pt]
\ln s, & \alpha=-1,
\end{cases}
\qquad s>0.
\]
Define
\[
\Delta\mathcal P(a,b;\mathbf c)
:=
\begin{cases}
\mathcal P_1(b;\mathbf c)-\mathcal P_1(a;\mathbf c), & a<b,\\[4pt]
0, & a\ge b.
\end{cases}
\]
Then, for two coefficient vectors $\mathbf c_+$ and $\mathbf c_-$, the unified interaction operator is defined by
\begin{equation}\label{eq:unified_I}
\mathcal I(a,b;\mathbf c_+,\mathbf c_-)
:=
\Delta\mathcal P(a_+,b_+;\mathbf c_+)
+
\Delta\mathcal P((-b)_+,(-a)_+;\mathbf c_-),
\end{equation}
where $r_+:=\max\{r,0\}$.
\paragraph{Diagonal entries}
We first consider the diagonal entry
\begin{equation}
\label{eq:Aii-Qi}
A_{ii}
=
\int_{\Omega} w_\delta(x)\mathcal Q_i(x)\,dx,
\qquad
\mathcal Q_i(x)
:=
\int_{-\eta_\delta(x)}^{\eta_\delta(x)}
\frac{(\phi_i(x+s)-\phi_i(x))^2}{|s|^\beta}\,ds.
\end{equation}
For a fixed base point $x$, the inner integral with respect to $s$ is split according to the support intervals containing $x+s$. The numerator is a quadratic polynomial and the singular integral can be evaluated by \eqref{eq:unified_I}.

We split the base-point domain according to whether \(x\) lies in the support of \(\phi_i\):
\[
\Omega_{{\rm in},i}:=[x_{i-1},x_{i+1}],
\qquad
\Omega_{{\rm out},i}:=(0,x_{i-1})\cup(x_{i+1},1).
\]
For brevity, write \(\eta_x:=\eta_\delta(x)\). The clipped shifted intervals are defined:
\begin{align*}
a_{i,L} &:= \max\{x_{i-1}-x,\,-\eta_\delta(x)\},
&
b_{i,L} &:= \min\{x_i-x,\,\eta_\delta(x)\},\\
a_{i,R} &:= \max\{x_i-x,\,-\eta_\delta(x)\},
&
b_{i,R} &:= \min\{x_{i+1}-x,\,\eta_\delta(x)\},\\
a_{i,O1} &:= \max\{-x,\,-\eta_\delta(x)\},
&
b_{i,O1} &:= \min\{x_{i-1}-x,\,0\},\\
a_{i,O2} &:= \max\{x_{i+1}-x,\,0\},
&
b_{i,O2} &:= \min\{1-x,\,\eta_\delta(x)\}.
\end{align*}
We denote these intervals by
\[
(a_{i,L},b_{i,L}),\quad
(a_{i,R},b_{i,R}),\quad
(a_{i,O_1},b_{i,O_1}),\quad
(a_{i,O_2},b_{i,O_2}).
\]

For \(x\in\Omega_{{\rm in},i}\), the coefficient vectors associated with the \(i\)-th basis function are
\begin{align*}
\mathbf{c}_{i,L}^{\pm}
&:=
\left(
\bigl(\phi_{i,1}(x)-\phi_i(x)\bigr)^2,\,
\pm \frac{2\bigl(\phi_{i,1}(x)-\phi_i(x)\bigr)}{h},\,
\frac{1}{h^2}
\right),\\
\mathbf{c}_{i,R}^{\pm}
&:=
\left(
\bigl(\phi_i(x)-\phi_{i,2}(x)\bigr)^2,\,
\pm \frac{2\bigl(\phi_i(x)-\phi_{i,2}(x)\bigr)}{h},\,
\frac{1}{h^2}
\right),\\
\mathbf{c}_{i,O}
&:=
\left(
\phi_i(x)^2,\,
0,\,
0
\right).
\end{align*}
Thus
\[
\begin{aligned}
I_{{\rm in},i}(x)
:={}&
\mathcal I(a_{i,L},b_{i,L};
\mathbf c_{i,L}^{+},\mathbf c_{i,L}^{-})
+
\mathcal I(a_{i,R},b_{i,R};
\mathbf c_{i,R}^{+},\mathbf c_{i,R}^{-})
\\
&+
\mathcal I(a_{i,O_1},b_{i,O_1};
\mathbf c_{i,O},\mathbf c_{i,O})
+
\mathcal I(a_{i,O_2},b_{i,O_2};
\mathbf c_{i,O},\mathbf c_{i,O}).
\end{aligned}
\]
For \(x\in\Omega_{{\rm out},i}\), one has \(\phi_i(x)=0\), and only the translated support of \(\phi_i(x+s)\) contributes. The coefficient vectors are

\[
\mathbf d_{i,1}^{+}
=
\left(
\phi_{i,1}(x)^2,\,
\frac{2\phi_{i,1}(x)}{h},\,
\frac1{h^2}
\right),
\qquad
\mathbf d_{i,1}^{-}
=
\left(
\phi_{i,1}(x)^2,\,
-\frac{2\phi_{i,1}(x)}{h},\,
\frac1{h^2}
\right),
\]
\[
\mathbf d_{i,2}^{+}
=
\left(
\phi_{i,2}(x)^2,\,
-\frac{2\phi_{i,2}(x)}{h},\,
\frac1{h^2}
\right),
\quad\;
\mathbf d_{i,2}^{-}
=
\left(
\phi_{i,2}(x)^2,\,
\frac{2\phi_{i,2}(x)}{h},\,
\frac1{h^2}
\right).
\]
Accordingly,
\[
I_{{\rm out},i}(x)
:=
\mathcal I(a_{i,L},b_{i,L};
\mathbf d_{i,1}^{+},\mathbf d_{i,1}^{-})
+
\mathcal I(a_{i,R},b_{i,R};
\mathbf d_{i,2}^{+},\mathbf d_{i,2}^{-}).
\]
Combining the two base-point regions gives
\begin{equation}
\label{eq:Aii-final}
A_{ii}
=
\int_{\Omega_{{\rm in},i}}
w_\delta(x)I_{{\rm in},i}(x)\,dx
+
\int_{\Omega_{{\rm out},i}}
w_\delta(x) I_{{\rm out},i}(x)\,dx .
\end{equation}

\paragraph{Near off-diagonal entries}
We next consider the adjacent entry \(A_{i,i-1}\). In contrast to the diagonal case, the supports of \(\phi_i\) and \(\phi_{i-1}\) overlap on the element \([x_{i-1},x_i]\). Hence the base-point domain is split into regions where the two basis functions are disconnected, connected through one shifted element, or overlap directly. Here, we define 
\begin{equation}\label{eq:mh}
m_h:=\left\lceil \max_{x\in\Omega}\frac{\eta_\delta(x)}{h}\right\rceil+1,
\end{equation}
and use the base-point intervals
\[
B_1:=(x_{i-1-m_h},x_{i-2}],\qquad
B_2:=(x_{i+1},x_{i+m_h}],
\]
and
\[
C_1:=\tau_{i-1},\qquad
C_2:=\tau_i,\qquad
C_3:=\tau_{i+1}.
\]
All intervals are understood to be intersected with \(\Omega\); empty intervals contribute zero.

For the disconnected regions \(B_1\cup B_2\), only the shifted overlap of the
two basis functions contributes. The clipped interval is $(a_i,b_i)$, 
\[
a_i:=\max\{-\eta_\delta(x),x_{i-1}-x\},
\qquad
b_i:=\min\{\eta_\delta(x),x_i-x\},
\]
with coefficient vector
\[
\mathbf c_i
:=
\left(
\phi_{i-1,2}(x)\phi_{i,1}(x),\,
\frac{\phi_{i-1,2}(x)-\phi_{i,1}(x)}{h},\,
-\frac1{h^2}
\right).
\]
Thus
\[
I_B(x):=\mathcal I(a_i,b_i;\mathbf c_i,\mathbf c_i).
\]
For the connected regions \(C_1\) and \(C_3\), the shifted intervals are \((a_{i,c_1},b_{i,c_1})\), \((a_{i,c_2},b_{i,c_2})\) and
\((a_{i,c_3},b_{i,c_3})\), here
\begin{align*}
a_{i,c1} &:= \max\{-\eta_\delta(x),\,x_{i-2}-x\},
&
b_{i,c1} &:= \min\{\eta_\delta(x),\,x_{i-1}-x\},\\
a_{i,c2} &:= \max\{-\eta_\delta(x),\,x_{i-1}-x\},
&
b_{i,c2} &:= \min\{\eta_\delta(x),\,x_i-x\},\\
a_{i,c3} &:= \max\{-\eta_\delta(x),\,x_i-x\},
&
b_{i,c3} &:= \min\{\eta_\delta(x),\,x_{i+1}-x\}.
\end{align*}
The coefficient vectors are
\begin{align*}
\mathbf{c}_{i,c1}
&=
\left(
\bigl(\phi_{i-1,2}(x)-\phi_{i-1,1}(x)\bigr)\phi_{i,1}(x),\,
\frac{\phi_{i-1,2}(x)-\phi_{i-1,1}(x)-\phi_{i,1}(x)}{h},\,
-\frac{1}{h^2}
\right),\\
\mathbf{c}_{i,c2}
&=
\left(
-\phi_{i,2}(x)\phi_{i-1,1}(x),\,
\frac{\phi_{i-1,1}(x)}{h},\,
0
\right),\\
\mathbf{c}_{i,c3}
&=
\left(
\phi_{i,2}(x)\phi_{i-1,1}(x),\,
\frac{\phi_{i,2}(x)}{h},\,
0
\right),\\
\mathbf{c}_{i,c4}
&=
\left(
\bigl(\phi_{i,1}(x)-\phi_{i,2}(x)\bigr)\phi_{i-1,2}(x),\,
\frac{\phi_{i-1,2}(x)-\phi_{i-1,1}(x)+\phi_{i,2}(x)}{h},\,
-\frac{1}{h^2}
\right).
\end{align*}
Therefore,
\[
I_{C_1}(x)
=
\mathcal I(a_{i,c_2},b_{i,c_2};\mathbf c_{i,c_1},\mathbf 0)
+
\mathcal I(a_{i,c_3},b_{i,c_3};\mathbf c_{i,c_2},\mathbf 0),
\]
\[
I_{C_3}(x)
=
\mathcal I(a_{i,c_1},b_{i,c_1};\mathbf 0,\mathbf c_{i,c_3})
+
\mathcal I(a_{i,c_2},b_{i,c_2};\mathbf 0,\mathbf c_{i,c_4}).
\]
It remains to treat the overlap region \(C_2=[x_{i-1},x_i]\), where both basis functions are nonzero at the base point. The breakpoints in the shifted variable are
\[
\theta_{i,0}:=-\eta_\delta(x),\quad
\theta_{i,1}:=x_{i-2}-x,\quad
\theta_{i,2}:=x_{i-1}-x,\quad
\theta_{i,3}:=x_i-x,
\]
\[
\theta_{i,4}:=x_{i+1}-x,\quad
\theta_{i,5}:=\eta_\delta(x).
\]
The coefficient vectors on the corresponding subintervals are
\begin{footnotesize}
\[
\mathbf c_{i,c5}
=
\bigl(\phi_{i,1}(x)\phi_{i-1,2}(x),0,0\bigr),
\quad
\mathbf c_{i,c6}
=
\left(
-\phi_{i,1}(x)\bigl(\phi_{i-1,1}(x)-\phi_{i-1,2}(x)\bigr),
-\frac{\phi_{i,1}(x)}{h},
0
\right),
\]
\[
\mathbf c_{i,c7}
=
\left(0,0,-\frac{1}{h^2}\right),
\quad
\mathbf c_{i,c8}
=
\left(
\phi_{i-1,2}(x)\bigl(\phi_{i,1}(x)-\phi_{i,2}(x)\bigr),
\frac{\phi_{i-1,2}(x)}{h},
0
\right).
\]
\end{footnotesize}
Thus the overlap contribution is
\[
\begin{aligned}
I_{C_2}(x)
={}&
\mathcal I(\theta_{i,0},\theta_{i,1};\mathbf 0,\mathbf c_{i,c_5})
+
\mathcal I(\theta_{i,1},\theta_{i,2};\mathbf 0,\mathbf c_{i,c_6})
\\
&+
\mathcal I(\theta_{i,2},\theta_{i,3};\mathbf c_{i,c_7},\mathbf c_{i,c_7})
+
\mathcal I(\theta_{i,3},\theta_{i,4};\mathbf c_{i,c_8},\mathbf 0)
+
\mathcal I(\theta_{i,4},\theta_{i,5};\mathbf c_{i,c_5},\mathbf 0).
\end{aligned}
\]
Combining these pieces, the adjacent off-diagonal entry is
\begin{equation}
\label{eq:Aii-minus-one}
\begin{aligned}
A_{i,i-1}
={}&
\int_{B_1}w_\delta(x){I_{B}(x)}\,dx
+
\int_{B_2}w_\delta(x)I_{B}(x)\,dx
\\
&+
\int_{C_1}w_\delta(x)I_{C_1}(x)\,dx
+
\int_{C_2}w_\delta(x)I_{C_2}(x)\,dx
+
\int_{C_3}w_\delta(x)I_{C_3}(x)\,dx .
\end{aligned}
\end{equation}
If \(m_h\le1\), then \(B_1\) and \(B_2\) are empty, and only the connected and overlap contributions remain.

\paragraph{Far off-diagonal entries}
We finally consider the case \(|i-j|>1\), where the supports of \(\phi_i\) and
\(\phi_j\) are disjoint. Then
\[
\phi_i(x)\phi_j(x)=0,
\qquad
\phi_i(x+s)\phi_j(x+s)=0,
\]
and only the mixed translated-support terms contribute. Thus, when
\(x\in\operatorname{supp}\phi_i\) and \(x+s\in\operatorname{supp}\phi_j\), the integrand contains the factor \(-\phi_i(x)\phi_j(x+s)\), and the symmetric case is obtained by interchanging \(i\) and \(j\).

For a fixed \(x\), the shifted support of \(\phi_j\) is split into two intervals $(a_{j,1},b_{j,1})$ and $(a_{j,2},b_{j,2})$ with
\[
a_{j,1}:=\max\{x_{j-1}-x,-\eta_\delta(x)\},
\qquad
b_{j,1}:=\min\{x_j-x,\eta_\delta(x)\},
\]
\[
a_{j,2}:=\max\{x_j-x,-\eta_\delta(x)\},
\qquad
b_{j,2}:=\min\{x_{j+1}-x,\eta_\delta(x)\}.
\]
The corresponding coefficient vectors are
\[
\mathbf d_{j,1}^{+}
=
\left(
\phi_{j,1}(x),\,
\frac1h,\,
0
\right),
\qquad
\mathbf d_{j,1}^{-}
=
\left(
\phi_{j,1}(x),\,
-\frac1h,\,
0
\right),
\]
\[
\mathbf d_{j,2}^{+}
=
\left(
\phi_{j,2}(x),\,
-\frac1h,\,
0
\right),
\quad\;
\mathbf d_{j,2}^{-}
=
\left(
\phi_{j,2}(x),\,
\frac1h,\,
0
\right).
\]
Define
\[
I_j(x)
:=
\mathcal I(a_{j,1},b_{j,1};
\mathbf d_{j,1}^{+},\mathbf d_{j,1}^{-})
+
\mathcal I(a_{j,2},b_{j,2};
\mathbf d_{j,2}^{+},\mathbf d_{j,2}^{-}).
\]
Then, for \(|i-j|>1\),
\begin{equation}
\label{eq:Aij-remote}
A_{ij}
=
-\int_{x_{i-1}}^{x_{i+1}}
w_\delta(x)\phi_i(x)I_j(x)\,dx
-
\int_{x_{j-1}}^{x_{j+1}}
w_\delta(x)\phi_j(x)I_i(x)\,dx .
\end{equation}
\begin{remark}[Implementation and matrix structure]
\label{rem:implementation-matrix-structure}
The assembly procedure has the following consequences. Since $\mathcal{B}_\delta$ is symmetric, the stiffness matrix satisfies $A_{ij}=A_{ji}$, so only the upper triangular part needs to be assembled. The procedure is semi-analytical: the singular inner integrals in the relative variable $s=y-x$ are evaluated analytically by $\mathcal{I}$, while the outer integrals in $x$ are computed by quadrature over the active base-point regions.
Moreover, under the definition \cref{eq:mh}, the support of the $P_1$ basis functions implies
\[
A_{ij}=0
\qquad \text{for } |i-j|>m_h+2 .
\]
Thus, $A$ is a symmetric banded matrix with total bandwidth at most $2m_h+5$.
\end{remark}
\section{Consistency and error analysis}
\label{sec:analysis}


\subsection{Smooth localization and second-order consistency}
\label{subsec:smooth-horizon}

\begin{assumption}[Additional smoothness for the consistency analysis]
\label{ass:second-order-localization} 
In this subsection, we strengthen Assumption~\ref{ass:localization} by requiring
$\lambda\in C^3(\Omega)$ and
\[
|\lambda'''(x)|
\le \kappa_3 d(x)^{-2},
\qquad x\in\Omega.
\]
\end{assumption}

\begin{remark}
\label{rem:admissible-profile}
A typical admissible choice, used in the numerical experiments, is
\[
    \lambda(x)=x(1-x),
    \qquad
    q(r)=r-1+e^{-r}.
\]
It satisfies Assumption~\ref{ass:second-order-localization}.
\end{remark}

\begin{lemma}
\label{lem:eta-properties}
Let \(r_x:=\eta_\delta(x)\).
Under Assumption~\ref{ass:second-order-localization}, there exists a constant \(C>0\), independent of \(\delta\), such that for $\forall x\in \Omega$
\[
    \eta_\delta(x)\le C\delta d^2(x),
    \qquad
    |\eta_\delta'(x)|\le C\delta d(x),
    \qquad
    |\eta_\delta'(x)|^2\le C\delta \eta_\delta(x),
\]
and
\[
    |\eta_\delta''(x)|\le C\delta,
    \qquad
    \eta_\delta(x)M_3(x)\le C\delta^2,
\]
where
\[
    M_3(x)
    :=
    \sup_{\substack{y\in\Omega\\|y-x|\le2\eta_\delta(x)}}
    |\eta_\delta'''(y)|.
\]
Moreover,
\[
    \eta_\delta(x)\le\frac13q(d(x))\le\frac13d(x).
\]
\end{lemma}

\begin{proof}
On the compact range of \(\lambda\), \((A_{q,2})\) gives, 
\[
    q(\rho)\le C\rho^2,
    \qquad
    |q'(\rho)|\le C\rho,
    \qquad
    |q'(\rho)|^2\le Cq(\rho),
\]
and boundedness of \(q''\) and \(q'''\). Assumption \((A_\lambda)\) with \(k=3\) gives
\[
    \lambda(x)\le Cd(x),
    \qquad
    |\lambda'(x)|\le C,
    \qquad
    |\lambda''(x)|\le Cd^{-1}(x),
    \qquad
    |\lambda'''(x)|\le Cd^{-2}(x).
\]
Consequently,
\[
    \eta_\delta(x)=\delta q(\lambda(x))\le C\delta d^2(x).
\]
The chain rule yields:
\[
    |\eta_\delta'(x)|\le C\delta d(x),
    \qquad
    |\eta_\delta''(x)|\le C\delta,
    \qquad
    |\eta_\delta'''(x)|\le C\delta d^{-1}(x),
\]
as well as
\[
    |\eta_\delta'(x)|^2
    \le
    C\delta^2q(\lambda(x))
    =
    C\delta \eta_\delta(x).
\]
The smallness condition on \(\delta\) from Assumption~\ref{ass:localization} gives
\[
    \eta_\delta(x)\le\frac13q(d(x))\le\frac13d(x).
\]
Thus, if \(|y-x|\le2\eta_\delta(x)\), then
\[
    d(y)\ge d(x)-|y-x|\ge\frac13d(x).
\]
Applying the estimate for \(\eta_\delta'''\) at \(y\) gives
\[
    M_3(x)\le C\delta d^{-1}(x).
\]
Therefore
\[
    \eta_\delta(x)M_3(x)
    \le
    C\delta d^2(x)\cdot\delta d^{-1}(x)
    \le
    C\delta^2.
\]
\end{proof}
\begin{lemma}[Consistency of the frozen-horizon operator]
\label{lem:frozen-consistency}
Let \(u\in H^4(\Omega)\cap H_0^1(\Omega)\). For \(\varepsilon>0\), define
\begin{equation}
\label{eq:frozen-truncated-operator}
(\widetilde{\mathcal L}_\delta^\varepsilon u)(x)
:=
\mathbf 1_{\{\eta(x)>\varepsilon\}}\,
2C_\beta
\int_{\varepsilon<|t|<1}
\frac{u(x)-u(x+\eta_\delta(x)t)}
     {\eta^2_\delta(x)|t|^\beta}
\,dt .
\end{equation}
Then the strong limit
\[
\widetilde{\mathcal L}_\delta u
:=
\lim_{\varepsilon\rightarrow0}
\widetilde{\mathcal L}_\delta^\varepsilon u
\qquad\text{in }L^2(\Omega)
\]
exists and satisfies
\begin{equation}
\label{eq:frozen-consistency}
\left\|
\widetilde{\mathcal L}_\delta u-(-u'')
\right\|_{L^2(\Omega)}
\leq
C\delta^2\|u\|_{H^4(\Omega)}.
\end{equation}
\end{lemma}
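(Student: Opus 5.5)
The plan is to symmetrize in \(t\) and Taylor-expand \(u\) about \(x\). For fixed \(x\) with \(\eta(x)>\varepsilon\), write \(r=\eta_\delta(x)\). Since the domain \(\{\varepsilon<|t|<1\}\) is symmetric, the truncated operator equals
\[
(\widetilde{\mathcal L}_\delta^\varepsilon u)(x)
=
-\frac{2C_\beta}{r^2}\int_{\varepsilon}^{1}
\frac{u(x+rt)+u(x-rt)-2u(x)}{t^\beta}\,dt .
\]
This is well defined because Lemma~\ref{lem:eta-properties} gives \(r\le \tfrac13 d(x)\), so \(x\pm rt\in\Omega\) for \(|t|\le1\). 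The odd first-order terms cancel. Taylor's formula with integral remainder gives
\[
u(x+rt)+u(x-rt)-2u(x)=r^2t^2u''(x)+\frac{r^4t^4}{6}\int_0^1(1-\theta)^3\bigl[u''''(x+\theta rt)+u''''(x-\theta rt)\bigr]\,d\theta .
\]
Here \(u\in H^4(\Omega)\) embeds into \(C^3(\overline\Omega)\), so the formula holds pointwise with \(u''''\in L^2\). The integrand divided by \(t^\beta\) is bounded by \(C t^{2-\beta}\), which is integrable on \((0,1)\) since \(\beta<3\). The limit \(\varepsilon\to0\) therefore exists pointwise for each \(x\). Because \(\{\eta(x)>\varepsilon\}\uparrow\Omega\), the indicator tends to \(1\).

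Next I identify the leading term using the normalization \(2C_\beta\int_0^1 t^{2-\beta}\,dt=1\). The quadratic part then contributes exactly \(-u''(x)\), and
\[
\widetilde{\mathcal L}_\delta u(x)+u''(x)=-\frac{C_\beta r^2}{3}\int_0^1 t^{4-\beta}\int_0^1(1-\theta)^3\bigl[u''''(x+\theta rt)+u''''(x-\theta rt)\bigr]\,d\theta\,dt .
\]
The leftover piece from truncation, \(-u''(x)\bigl(1-2C_\beta\int_\varepsilon^1 t^{2-\beta}\,dt\bigr)\), tends to zero in \(L^2\) by dominated convergence. The remainder part converges by the same argument, so the \(L^2\) limit exists. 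The bound \(|\cdots|\le C t^{2-\beta}\) is only pointwise, though, and still involves \(u''''\), so the remainder estimate below also provides the dominating function.

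The main step is the \(L^2\) bound on the remainder. Lemma~\ref{lem:eta-properties} gives \(r^2=\eta_\delta(x)^2\le C\delta^2 d(x)^4\le C\delta^2\). Applying Minkowski's integral inequality in \((t,\theta)\), it suffices to bound
\[
\Bigl\|\,u''''\bigl(x\pm\theta t\,\eta_\delta(x)\bigr)\Bigr\|_{L^2(dx)}
\]
uniformly in \(\theta t\in[0,1]\). I will do this by the change of variables \(y=\psi(x):=x\pm s\,\eta_\delta(x)\) with \(s\in[0,1]\). Its derivative satisfies \(\psi'(x)=1\pm s\eta_\delta'(x)\ge 1-C\delta\ge \tfrac12\), using \(|\eta_\delta'|\le C\delta d\) and the smallness of \(\delta\) from Assumption~\ref{ass:localization}. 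Hence \(\psi\) is a diffeomorphism into \(\Omega\) with Jacobian bounded below, and \(\|u''''\circ\psi\|_{L^2}\le \sqrt2\,\|u''''\|_{L^2}\). This yields \(\|\widetilde{\mathcal L}_\delta u+u''\|_{L^2}\le C\delta^2\|u\|_{H^4}\).

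The same uniform bound, applied to the \(\varepsilon\)-truncated remainders, justifies dominated convergence in \(L^2\) and thus the existence of the strong limit. The step I expect to need the most care is this change of variables, specifically checking that \(\eta_\delta'\) is bounded by a small multiple uniformly in \(\delta\) so that \(\psi\) is monotone. If a precise constant is needed there, I would use the estimate \(|\eta_\delta'(x)|\le \delta\,\|q'\|_\infty\kappa_1\le \kappa_1\delta<\tfrac13\) from Assumption~\ref{ass:localization}.
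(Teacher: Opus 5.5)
Your proof is correct. Its skeleton is the same as the paper's: symmetric Taylor expansion with integral remainder, cancellation of the odd terms, and the normalization $2C_\beta\int_0^1 t^{2-\beta}\,dt=1$ producing $-(1-\varepsilon^{3-\beta})u''$, plus a remainder of size $\eta_\delta^2$.

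The genuine difference is how you control the remainder in $L^2$. The paper bounds it pointwise, $|R_{\delta,\varepsilon}(x)|+|R_\delta(x)|\le C\eta_\delta(x)^2\,\mathcal M(|u^{(4)}|)(x)$, using the Hardy--Littlewood maximal function of the zero extension of $u^{(4)}$, and then invokes the $L^2$-boundedness of $\mathcal M$. This gives, in one step, an $\varepsilon$-independent $L^2$ majorant for dominated convergence. It uses only $\eta_\delta\le d/3$, so that the Taylor points stay in $\Omega$, and needs no information on $\eta_\delta'$.

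You instead apply Minkowski's inequality in $(t,\theta)$ and the change of variables $y=x\pm s\,\eta_\delta(x)$. This requires $\eta_\delta\in C^1(\Omega)$ with $|\eta_\delta'|\le\kappa_1\delta<1/3$, so that the map is increasing with Jacobian at least $2/3$. Your route is more elementary, since it avoids the maximal theorem, and gives an explicit constant. Applied to the tail $\int_0^\varepsilon$, it also yields a quantitative rate $\|R_\delta-R_{\delta,\varepsilon}\|_{L^2}\le C\delta^2\varepsilon^{5-\beta}\|u^{(4)}\|_{L^2}$ for the $L^2$ limit. The cost is that the estimate is tied to the Lipschitz control of the horizon.

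Your dominating function, the same remainder expression with $|u''''|$ in place of $u''''$, is bounded in $L^2$ by the same change-of-variables argument. So the dominated-convergence step is justified as you indicate.
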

\begin{proof}
By Lemma~\ref{lem:eta-properties}, \(x+\eta_\delta(x)t\in\Omega\) for
\(|t|<1\). Taylor's formula with integral remainder gives
\[
\begin{aligned}
u(x+\eta_\delta(x)t)-u(x)
={}&
\eta_\delta(x)tu'(x)
+
\frac{\eta^2_\delta(x)t^2}{2}u''(x)
+
\frac{\eta^3_\delta(x)t^3}{6}u^{(3)}(x)
\\
&+
\frac{\eta^4_\delta(x)t^4}{6}
\int_0^1(1-\theta)^3
u^{(4)}(x+\theta \eta_\delta(x)t)\,d\theta .
\end{aligned}
\]
For each fixed \(\varepsilon>0\), the truncated set
\(\{\varepsilon<|t|<1\}\) is symmetric; hence the terms containing
\(t\) and \(t^3\) vanish before the limit is taken. Consequently,
\begin{equation}
\label{eq:frozen-epsilon-decomposition}
\widetilde{\mathcal L}_\delta^\varepsilon u
=
\mathbf 1_{\{\eta>\varepsilon\}}
\left[
-(1-\varepsilon^{3-\beta})u''
+
R_{\delta,\varepsilon}
\right],
\end{equation}
where
\[
\begin{aligned}
R_{\delta,\varepsilon}(x)
:={}&
-\frac{C_\beta \eta^2_\delta(x)}{3}
\int_{\varepsilon<|t|<1}|t|^{4-\beta}
\int_0^1(1-\theta)^3
u^{(4)}(x+\theta \eta_\delta(x)t)
\,d\theta\,dt .
\end{aligned}
\]

Let \(R_\delta\) be the same expression with \(\varepsilon=0\). If \(\mathcal M\) denotes the Hardy--Littlewood maximal operator \cite{Stein1970SingularIntegrals} applied to the zero extension of \(u^{(4)}\), then
\[
|R_{\delta,\varepsilon}(x)|+|R_\delta(x)|
\leq
C\eta^2_\delta(x)\mathcal M(|u^{(4)}|)(x)
\leq
C\delta^2\mathcal M(|u^{(4)}|)(x).
\]
Since \(4-\beta>-1\), dominated convergence in \(t\), followed by the
\(L^2\)-boundedness of \(\mathcal M\), yields
\[
R_{\delta,\varepsilon}
\longrightarrow R_\delta
\qquad\text{in }L^2(\Omega).
\]
Moreover,
\(\mathbf 1_{\{\eta>\varepsilon\}}\to1\) almost everywhere and
\(1-\varepsilon^{3-\beta}\to1\). The right-hand side of
\eqref{eq:frozen-epsilon-decomposition} is dominated in \(L^2\) by
\[
|u''|+C\delta^2\mathcal M(|u^{(4)}|).
\]
Therefore,
\[
\widetilde{\mathcal L}_\delta^\varepsilon u
\longrightarrow
-u''+R_\delta
\qquad\text{in }L^2(\Omega),
\]
which proves the existence of the realized frozen operator. Finally,
\[
\left\|
\widetilde{\mathcal L}_\delta u-(-u'')
\right\|_{L^2}
=
\|R_\delta\|_{L^2}
\leq
C\delta^2\|u^{(4)}\|_{L^2}
\leq
C\delta^2\|u\|_{H^4}.
\]
\end{proof}

\begin{proposition}
\label{prop:LminusLtilde}
Under Assumption~\ref{ass:second-order-localization}, for every
\(u\in H^4(\Omega)\cap H_0^1(\Omega)\),
\begin{equation}
\label{eq:LminusLtilde-bound}
    \|(\mathcal L_\delta-\widetilde{\mathcal L}_\delta)u\|_{L^2(\Omega)}
    \le
    C\delta^2\|u\|_{H^4(\Omega)}.
\end{equation}
\end{proposition}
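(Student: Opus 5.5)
The plan is to split \(\mathcal L_\delta u\) into its two kernel contributions, show that the first equals exactly half of \(\widetilde{\mathcal L}_\delta u\), and show that the second equals the other half up to \(O(\delta^2)\). All computations are done on the \(\varepsilon\)-truncated operators \(\mathcal L^\varepsilon_\delta u\) and \(\widetilde{\mathcal L}^\varepsilon_\delta u\). Every remainder is bounded pointwise by \(C\delta^2\) times Hardy--Littlewood maximal functions of \(u''\), \(u'''\), \(u^{(4)}\), uniformly in \(\varepsilon\). Passing \(\varepsilon\to0\) with Proposition~\ref{prop:hard-cutoff-realization} and Lemma~\ref{lem:frozen-consistency} then gives \eqref{eq:LminusLtilde-bound}.

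\textbf{First term.} For the term with \(w_\delta(x)\mathbf 1_{\{|x-y|<\eta_\delta(x)\}}\), substitute \(y=x+\eta_\delta(x)t\). Since \(w_\delta(x)\eta_\delta(x)^{1-\beta}=C_\beta\eta_\delta(x)^{-2}\), this yields exactly
\[
C_\beta\int_{\varepsilon<|t|<1}\frac{u(x)-u(x+\eta_\delta(x)t)}{\eta_\delta^2(x)|t|^\beta}\,dt ,
\]
which is half of \eqref{eq:frozen-truncated-operator}. The truncation sets match exactly.

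\textbf{Second term: change of variables.} For the term with \(w_\delta(y)\mathbf 1_{\{|x-y|<\eta_\delta(y)\}}\), use the implicit change of variables \(t=(y-x)/\eta_\delta(y)\). Its derivative is
\[
\frac{dt}{dy}=\frac{\eta_\delta(y)-(y-x)\eta_\delta'(y)}{\eta_\delta(y)^2}.
\]
By Lemma~\ref{lem:eta-properties}, \(|\eta_\delta'|\le C\delta d\) is small, so this derivative is positive. Hence the map \(y\mapsto t\) is a diffeomorphism onto \(\{|t|<1\}\), with inverse \(y=Y_x(t)\). The truncation \(\varepsilon\eta_\delta(y)<|x-y|\) becomes exactly \(\varepsilon<|t|\), and \(Y_x(t)\in\Omega\) by \(\eta_\delta\le d/3\). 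The second term becomes
\[
C_\beta\int_{\varepsilon<|t|<1}\bigl(u(x)-u(Y_x(t))\bigr)\frac{Y_x'(t)}{\eta_\delta(Y_x(t))^{3}|t|^\beta}\,dt .
\]

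\textbf{Second term: expansion.} Next expand around \(t=0\):
\[
Y_x(t)=x+\eta t+\eta\eta' t^2+\tfrac12\bigl(2\eta\eta'^2+\eta^2\eta''\bigr)t^3+O\bigl(\eta^4M_3+\cdots\bigr)t^4,
\]
where \(\eta=\eta_\delta(x)\). Correspondingly expand \(Y_x'(t)/\eta_\delta(Y_x)^3\). Also Taylor-expand \(u(x)-u(Y_x(t))\) to fourth order, as in the proof of Lemma~\ref{lem:frozen-consistency}. Collect powers of \(t\) and compare with the frozen integrand. Odd powers of \(t\) integrate to zero over the symmetric set \(\{\varepsilon<|t|<1\}\). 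The dangerous terms are those of order \(\eta'/\eta\), which come from the \(t^2\)-correction in \(Y_x\) and the \(-3\eta' t\) correction in \(\eta_\delta(Y_x)^{-3}\). I expect them to combine into odd functions of \(t\), or to cancel, reflecting the symmetry of the bilinear form; the heuristic moment computation indeed shows the \(O(\eta'/\eta)\) contributions to the \(u'\)-coefficient cancel. The remaining even-order corrections are then controlled by Lemma~\ref{lem:eta-properties}: they are of size \(|\eta'|^2\), \(\eta|\eta''|\), or \(\eta^2M_3\), multiplied by \(|u''|/\eta\cdot\eta\), \(|u'''|\eta\), or \(|u^{(4)}|\eta^2\) as appropriate. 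Using \(|\eta_\delta'|^2\le C\delta\eta_\delta\), \(|\eta_\delta''|\le C\delta\), \(\eta_\delta M_3\le C\delta^2\) and \(\eta_\delta\le C\delta d^2\), each such term should reduce to \(C\delta^2\) times a bounded multiple of a maximal function.

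\textbf{Main obstacle.} The hard step is the exact bookkeeping of the \(t\)- and \(t^2\)-order terms multiplying \(u'(x)\) and \(u''(x)\). The term \(u'(x)\cdot(\text{even part of the Jacobian correction})\) must vanish to order \(\delta^2\) even though individual pieces are only \(O(\eta'/\eta)\sim O(1/d)\). My first attempt is to avoid expanding the Jacobian at all: write the \(u'\)-coefficient as
\[
\int \frac{(Y_x(t)-x)\,Y_x'(t)}{\eta_\delta(Y_x)^3|t|^\beta}\,dt ,
\]
substitute \(Y_x-x=\eta_\delta(Y_x)t\), and recognize the integrand as a derivative in \(t\) plus an odd part, so that it integrates to a boundary term evaluated at \(t=\pm1\) and \(t=\pm\varepsilon\) that cancels by symmetry up to \(O(\delta^2)\). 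If that fails, the fallback is a direct Taylor expansion of \(\eta_\delta(Y_x(t))\) to third order with the remainder controlled by \(M_3\). The uniformity in \(\varepsilon\) for \(2\le\beta<3\) should follow because all corrections carry at least \(|t|^{2-\beta}\), which is integrable.
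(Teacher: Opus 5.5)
Your proposal is correct, and after the shared first step it takes a different route from the paper. The shared step is the observation that the $w_\delta(x)$-part of $\mathcal L^\varepsilon_\delta u$ is exactly half of $\widetilde{\mathcal L}^\varepsilon_\delta u$.

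The paper applies the linear substitution $y=x+\eta_\delta(x)t$ to the $w_\delta(y)$-part as well. The integrand stays the frozen $\Phi(t)$, but three things change: the weight becomes $\omega(t)$, the domain becomes the implicitly defined, asymmetric interval $I_x=(a,b)$, and the truncation becomes $|t|>\varepsilon\vartheta(t)$. The paper therefore has to expand $a,b$ to second order and split into $Q^\varepsilon_{\rm tr}$, $R^\varepsilon_\omega$ and $R^\varepsilon_{\rm I}$. The $O(\eta_\delta'/\eta_\delta)$ cancellation shows up there as the endpoint shift of $J$ against the linear weight correction $-(3-\beta)\eta_\delta' t$. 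It also has to handle a cutoff mismatch bounded only by $C\delta\varepsilon^{3-\beta}/\eta_\delta(x)$. That bound is not uniform in $x$, which is why the paper closes with an a.e.-subsequence argument.

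Your substitution $t=(y-x)/\eta_\delta(y)$ makes both the domain and the truncation exactly $\{\varepsilon<|t|<1\}$. So there is no endpoint bookkeeping and no mismatch term, every bound is uniform in $\varepsilon$, and all the geometry sits in the single factor $Y_x'/\eta_\delta(Y_x)^3$. The cost is that $u(Y_x(t))$ is no longer the frozen integrand. You therefore need the fourth-order expansion of $u$ together with $\mathcal M(|u^{(4)}|)$, whereas the paper, which compares one $\Phi$ against two weights and domains, gets by with $\|u\|_{C^3(\overline\Omega)}$.

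The step you single out as the main obstacle does close along the lines you propose. Set $H(t):=\eta_\delta(x)/\eta_\delta(Y_x(t))$. Then one has exactly
\[
\frac{(Y_x(t)-x)\,Y_x'(t)}{\eta_\delta(Y_x(t))^{3}}=\frac{tH(t)-t^2H'(t)}{\eta_\delta(x)},
\qquad
H=1-\eta_\delta'(x)t-\frac{\eta_\delta(x)\eta_\delta''(x)t^2}{2H}-\frac{\eta_\delta(x)^2\eta_\delta'''(\xi)t^3}{6H^2},
\]
with $\xi$ between $x$ and $Y_x(t)$. By Lemma~\ref{lem:eta-properties} this gives $H=1-\eta_\delta'(x)t-\frac12\eta_\delta(x)\eta_\delta''(x)t^2+O(\delta^2\eta_\delta(x)|t|^3)$. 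The map $t^k\mapsto(1-k)t^{k+1}$ annihilates the linear part of $H$ and makes the quadratic part odd. Integrating $t^2H'|t|^{-\beta}$ by parts leaves $(3-\beta)\int t|t|^{-\beta}H\,dt$ plus values of $H$ at $\pm1,\pm\varepsilon$, in which the $\eta_\delta'(x)$-terms cancel exactly for every $\varepsilon$. Hence the $u'(x)$-coefficient is $O(\delta^2)$ uniformly in $\varepsilon$.

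One small correction: the quartic coefficient of $Y_x$ is of order $\eta_\delta^3M_3$, not $\eta_\delta^4M_3$. This is still $O(\delta^2\eta_\delta^2)$ and therefore harmless.
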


\begin{proof}
Fix \(x\in\Omega\), we use the notation
\[
r:=\eta_\delta(x),\qquad
\partial_x\eta_\delta=\partial_x\eta_\delta(x),\qquad
\partial_{xx}\eta_\delta=\partial_{xx}\eta_\delta(x),\qquad
\partial_{xxx}\eta_\delta=\partial_{xxx}\eta_\delta(x).
\]
to denote the first-, second-, and third-order derivatives of the localization function evaluated at the fixed point $x$. By Lemma~\ref{lem:eta-properties},
\begin{equation}
\label{eq:profile-bounds-paper}
   | r|\le C\delta,\;
    |\partial_x\eta_\delta|\le C\delta,\;
    (\partial_x\eta_\delta)^2\le C\delta \eta_\delta,\;
    |\partial_{xx}\eta_\delta|\le C\delta,\;
    r M_3\le C\delta^2.
\end{equation}
Since \(\Omega=(0,1)\), the Sobolev embedding gives
\[
H^4(\Omega)\hookrightarrow C^3(\overline\Omega),
\qquad
\|u\|_{C^k(\overline\Omega)}
\le C\|u\|_{H^4(\Omega)},
\quad k=1,2,3.
\]
For fixed $x$, we define functions
\[
    \Phi(t):=\frac{u(x)-u(x+r t)}{|t|^\beta},
    \qquad
    \vartheta(t):=\frac{\eta_\delta(x+r t)}{r},
    \qquad
    \omega(t):=
    \vartheta^{-(3-\beta)}(t),
\]
and
\[
    I_x:=
    \left\{
    t:\ x+rt,\;t\in\Omega,\
    |t|<\frac{\eta_\delta(x+r t)}{r}
    \right\}.
\]
The uniform bound on \(\|\partial_x\eta_\delta\|_\infty\) also gives \(I_x\subset(-\frac{3}{2},\frac{3}{2})\), so that all Taylor expansions below are performed on a fixed compact interval. Taylor's formula gives, for \(t\in I_x\),
\begin{equation}
\label{eq:vartheta-Taylor}
\vartheta(t)
=
1+(\partial_x\eta_\delta) t+\frac12(\partial_{xx}\eta_\delta )rt^2+e_\vartheta(t),
\qquad
|e_\vartheta(t)|
\le
CM_3\eta^2_\delta|t|^3.
\end{equation}
All constants below are independent of \(x\), \(\delta\), and \(\varepsilon\). At the truncated level, the change of variables \(y=x+r t\) gives the exact identity
\begin{equation}
\label{eq:scaled-difference-epsilon-paper}
\begin{aligned}
\bigl(
\mathcal L_\delta^\varepsilon
-
\widetilde{\mathcal L}_\delta^\varepsilon
\bigr)u(x)
=
\mathbf1_{\{\eta(x)>\varepsilon\}}
C_\beta \eta^{-2}_\delta
\Bigg[
&
\int_{\substack{t\in I_x,\,|t|>\varepsilon \vartheta(t)}}
\omega(t)\Phi(t)\,dt
\\-
&\int_{\substack{-1<t<1,\,|t|>\varepsilon}}
\Phi(t)\,dt
\Bigg].
\end{aligned}
\end{equation}

Adding and subtracting the same common-cutoff integral then gives the exact finite-\(\varepsilon\) decomposition
\begin{equation}
\label{eq:three-term-epsilon-paper}
\begin{aligned}
\bigl(
\mathcal L_\delta^\varepsilon
-
\widetilde{\mathcal L}_\delta^\varepsilon
\bigr)u(x)
=
\mathbf1_{\{\eta(x)>\varepsilon\}}
\left[
\mathcal Q_\varepsilon
+R_\omega^\varepsilon
+R_{\rm I}^\varepsilon
\right].
\end{aligned}
\end{equation}
Let
\[
    J:=(-1+\partial_x\eta_\delta,1+\partial_x\eta_\delta)
\]
and define
\begin{align}
Q_{\rm tr}^\varepsilon
:={}&
C_\beta\eta^{-2}_\delta
\left[
\int_{\substack{t\in J,\,|t|>\varepsilon}}
[1-(3-\beta)(\partial_x\eta_\delta) t]\Phi(t)\,dt
-
\int_{\substack{-1<t<1,\,|t|>\varepsilon}}
\Phi(t)\,dt
\right],
\label{eq:Q-epsilon-paper}\\
R_\omega^\varepsilon
:={}&
C_\beta \eta^{-2}_\delta
\int_{\substack{t\in I_x,\,|t|>\varepsilon}}
\bigl[\omega(t)-(1-(3-\beta)(\partial_x\eta_\delta) t)\bigr]\Phi(t)\,dt,
\label{eq:Romega-epsilon-paper}\\
R_{\rm I}^\varepsilon
:={}&
C_\beta \eta^{-2}_\delta
\left(
\int_{\substack{t\in I_x,\,|t|>\varepsilon}}
-
\int_{\substack{t\in J,\,|t|>\varepsilon}}
\right)
[1-(3-\beta)(\partial_x\eta_\delta) t]\Phi(t)\,dt.
\label{eq:RI-epsilon-paper}
\end{align}
To replace the two lower cutoffs by one common symmetric cutoff, set
\begin{equation}
\label{eq:cutoff-mismatch-paper}
E_\varepsilon(x)
:=
\int_{I_x}
\left(
\mathbf1_{\{|t|>\varepsilon\vartheta(t)\}}
-
\mathbf1_{\{|t|>\varepsilon\}}
\right)
\omega(t)\Phi(t)\,dt .
\end{equation}
Absorb the cutoff mismatch into the translated term by setting
\[
    \mathcal Q_\varepsilon
    :=
    Q_{\rm tr}^\varepsilon
    +C_\beta \eta^{-2}_\delta E_\varepsilon.
\]

Since \(\|\partial_x\eta_\delta\|_\infty<1\), \(I_x=(a,b)\), where
\[
    b=\frac{\eta_\delta(x+r b)}{r },
    \qquad
    -a=\frac{\eta_\delta(x+r a)}{r}.
\]
Taylor's formula and the mean-value theorem yield
\begin{equation}
\label{eq:endpoints-paper}
\begin{aligned}
    a&=-1+\partial_x\eta_\delta-[(\partial_x\eta_\delta)^2+\frac12(\partial_{xx}\eta_\delta )\eta_\delta]+\rho_a,\\
    b&= 1+\partial_x\eta_\delta+[(\partial_x\eta_\delta)^2+\frac12(\partial_{xx}\eta_\delta) \eta_\delta]+\rho_b.
\end{aligned}
\end{equation}
with
\begin{equation}
\label{eq:endpoint-remainder-paper}
    |\rho_a|+|\rho_b|
    \le
    C\bigl(
    |\partial_x\eta_\delta|^3+|\partial_x\eta_\delta||\partial_{xx}\eta_\delta|r
    +|\partial_{xx}\eta_\delta|^2r^2+M_3 r^2
    \bigr).
\end{equation}
We now estimate the three terms in \eqref{eq:three-term-epsilon-paper} before letting \(\varepsilon\rightarrow0\).  For $Q_{\rm tr}^\varepsilon$, apply the endpoint transport formula
\[
\begin{aligned}
\int_{-1+\partial_x\eta_\delta}^{1+\partial_x\eta_\delta}F(t)\,dt
={}&
\int_{-1}^{1}F(t)\,dt
+(\partial_x\eta_\delta)\bigl(F(1)-F(-1)\bigr)
+\frac{(\partial_x\eta_\delta)^2}{2}\bigl(F'(1)-F'(-1)\bigr)\\
&+O\!\left(
|\partial_x\eta_\delta|^3
\max_{\pm}
\sup_{|t\mp1|\le|\partial_x\eta_\delta|}
|F''(t)|
\right).
\end{aligned}
\]
to \(F=\Phi\) and \(F=t\Phi\). The common central hole cancels before this formula is applied.  Moreover, we have
\begin{equation}\label{eq:phi}
    \Phi(1)-\Phi(-1)
    =-2 r u'(x)+O(r^3)\|u\|_{C^3(\overline{\Omega})},
\end{equation}
and symmetric Taylor expansion
\begin{equation}\label{eq:tphiint}
    \int_{\varepsilon<|t|<1}t\Phi(t)\,dt
    =
    -\frac{2 r}{3-\beta}
    \bigl(1-\varepsilon^{3-\beta}\bigr)u'(x)
    +O(r^3)
\end{equation}
Applying  \eqref{eq:phi} and \eqref{eq:tphiint} into $Q_{\rm tr}^\varepsilon$, we have
\[
|  Q_{\rm tr}^\varepsilon|\le
    C\left(
    |\partial_x\eta_\delta|r+(\partial_x\eta_\delta)^2+\frac{|\partial_x\eta_\delta|^3}{r}
    \right)\|u\|_{C^3}
\]
From \eqref{eq:vartheta-Taylor} and the definition of \(\Phi\), we have 
\[
    |\vartheta(t)-1|\le C\delta|t|,
    \qquad
    |\Phi(t)|\le Cr|t|^{1-\beta}\|u\|_{C^1(\overline{\Omega})},
\]
the support of the integrand in \eqref{eq:cutoff-mismatch-paper} has measure at most \(C\delta\varepsilon^2\) and is contained in \(\{\varepsilon/2<|t|<2\varepsilon\}\), for sufficiently small \(\varepsilon\). Therefore, when $\varepsilon \rightarrow 0$,
\begin{equation}
\label{eq:cutoff-mismatch-bound-paper}
    \eta^{-2}_\delta|E_\varepsilon(x)|
    \le
    C\delta \eta^{-1}_\delta\varepsilon^{3-\beta}\|u\|_{C^1(\overline{\Omega})}
    \longrightarrow0
\end{equation}
for each fixed \(x\in\Omega\).  Combining the remaining endpoint terms with \eqref{eq:cutoff-mismatch-bound-paper} yields
\begin{equation}
\label{eq:Qepsilon-paper}
    |\mathcal Q_\varepsilon|
    \le
    C\left(
    |\partial_x\eta_\delta|r+(\partial_x\eta_\delta)^2+\frac{|\partial_x\eta_\delta|^3}{r}
    \right)\|u\|_{C^3}
    +
    C\frac{\delta}{\eta_\delta}
    \varepsilon^{3-\beta}\|u\|_{C^1(\overline{\Omega})}.
\end{equation}
For the weight term, Taylor expansion on \(|t|\le2\) gives
\[
\omega(t)-\bigl(1-(3-\beta)(\partial_x\eta_\delta) t\bigr)
=
\left(
-\frac{3-\beta}{2}(\partial_{xx}\eta_\delta) r
+\frac{(3-\beta)(4-\beta)}2(\partial_x\eta_\delta)^2
\right)t^2+\mathcal R_\omega(t),
\]
where
\[
    |\mathcal  R_\omega(t)|
    \le
    C\bigl(
    |\partial_x\eta_\delta|^3+|\partial_x\eta_\delta||\partial_{xx}\eta_\delta|r
    +|\partial_{xx}\eta_\delta|^2\eta^2_\delta+M_3\eta^2_\delta
    \bigr)|t|^3.
\]
The leading odd part cancels on the common symmetric core of \(I_x\), including the removed symmetric hole.  Indeed,
\[
\Phi(t)
=
-r u'(x)\frac{t}{|t|^\beta}
-\frac{r^2}{2}u''(x)\frac{t^2}{|t|^\beta}
+
O\!\left(r^3|t|^{3-\beta}\right)\|u\|_{C^3(\overline{\Omega})}.
\]
The first term is odd and cancels on the common symmetric core, whereas the endpoint strips have total length \(O(|\partial_x\eta_\delta|)\). Using \eqref{eq:endpoints-paper} for the remaining endpoint strips gives, for each fixed \(x\in\Omega\) and all sufficiently small \(\varepsilon\),
\[
    \left|
    \int_{\substack{t\in I_x,\,|t|>\varepsilon}}
    t^2\Phi(t)\,dt
    \right|
    \le C(r^2+|\partial_x\eta_\delta|r)\|u\|_{C^2(\overline{\Omega})}.
\]
Together with \(|\Phi(t)|\le Cr|t|^{1-\beta}\|u\|_{C^1(\overline{\Omega})}\), this yields
\begin{equation}
\label{eq:Romegaepsilon-paper}
\begin{aligned}
|R_\omega^\varepsilon|
\leq C\Big(
&(\partial_x\eta_\delta)^2
+|\partial_{xx}\eta_\delta|r
+\frac{|\partial_x\eta_\delta|^3}{r}\\
&+|\partial_x\eta_\delta||\partial_{xx}\eta_\delta|
+|\partial_{xx}\eta_\delta|^2r
+M_3r
\Big)\|u\|_{C^2(\overline{\Omega})}.
\end{aligned}
\end{equation}
Finally, set \(H(t)=(1-(3-\beta)(\partial_x\eta_\delta) t)\Phi(t)\).  For sufficiently small \(\varepsilon\), the same central hole is removed from the two integrals defining \(R_{\rm I}^\varepsilon\), and therefore cancels exactly.  The endpoint expansion \eqref{eq:endpoints-paper} and the symmetric endpoint cancellation
\[
    |H(1+\partial_x\eta_\delta)+H(-1+\partial_x\eta_\delta)|
    \le C(\eta^2_\delta+|\partial_x\eta_\delta|r)\|u\|_{C^2(\overline{\Omega})}
\]
then give
\begin{equation}
\label{eq:RIepsilon-paper}
\begin{aligned}
|R_{\rm I}^\varepsilon|
\leq
C\Big(
&(\partial_x\eta_\delta)^2
+|\partial_{xx}\eta_\delta|r
+\frac{|\partial_x\eta_\delta|^3}{r}\\
&+|\partial_x\eta_\delta||\partial_{xx}\eta_\delta|
+|\partial_{xx}\eta_\delta|^2r
+M_3r
\Big)
\|u\|_{C^2(\overline{\Omega})}.
\end{aligned}
\end{equation}
This follows by expanding \(u(x\pm \eta_\delta(1+\partial_x\eta_\delta))\) about \(x\); the two terms of order \(r\) cancel in the sum.

Combining \eqref{eq:Qepsilon-paper}--\eqref{eq:RIepsilon-paper} with \eqref{eq:profile-bounds-paper} gives, for each fixed \(x\in\Omega\) and all sufficiently small \(\varepsilon\),
\begin{equation}
\label{eq:finite-epsilon-final-paper}
\left|
\bigl(
\mathcal L_\delta^\varepsilon
-
\widetilde{\mathcal L}_\delta^\varepsilon
\bigr)u(x)
\right|
\le
C\delta^2\|u\|_{C^3(\overline{\Omega})}
+
C\frac{\delta}{r}
\varepsilon^{3-\beta}\|u\|_{C^1(\overline{\Omega})}.
\end{equation}
The last term tends to zero for fixed \(x\), since \(r>0\) and \(\beta<3\). By the strong \(L^2\)-realizations of the two operators,
\[
\bigl(
\mathcal L_\delta^\varepsilon
-
\widetilde{\mathcal L}_\delta^\varepsilon
\bigr)u
\longrightarrow
(\mathcal L_\delta-\widetilde{\mathcal L}_\delta)u
\qquad\text{in }L^2(\Omega).
\]
Hence, by the subsequence principle, there exists a sequence \(\varepsilon_j\rightarrow0\) such that
\[
\bigl(
\mathcal L_\delta^{\varepsilon_j}
-
\widetilde{\mathcal L}_\delta^{\varepsilon_j}
\bigr)u(x)
\longrightarrow
(\mathcal L_\delta-\widetilde{\mathcal L}_\delta)u(x)
\]
for almost every \(x\in\Omega\). For each such fixed \(x\), one has \(r=\eta_\delta>0\), and hence
\[
\frac{\delta}{r}\varepsilon_j^{3-\beta}\longrightarrow0.
\]
Therefore,
\[
|(\mathcal L_\delta-\widetilde{\mathcal L}_\delta)u(x)|
\le
C\delta^2\|u\|_{C^3(\overline{\Omega})}
\qquad\text{for a.e. }x\in\Omega.
\]
Integrating the almost-everywhere estimate over \(\Omega\) and using the Sobolev embedding
\(H^4(\Omega)\hookrightarrow C^3(\overline{\Omega})\), we obtain
\[
\|(\mathcal L_\delta-\widetilde{\mathcal L}_\delta)u\|_{L^2(\Omega)}
\le
C\delta^2\|u\|_{H^4(\Omega)}.
\]

\end{proof}

\begin{theorem}[Second-order operator consistency]
\label{thm:operator-consistency}
Under Assumption~\ref{ass:second-order-localization}, for every
\(u\in H^4(\Omega)\cap H_0^1(\Omega)\),
\begin{equation}
\label{eq:operator-consistency}
    \|\mathcal L_\delta u-(-u'')\|_{L^2(\Omega)}
    \le
    C\delta^2\|u\|_{H^4(\Omega)},
\end{equation}
where \(C\) is independent of \(u\) and \(\delta\).
\end{theorem}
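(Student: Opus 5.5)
The estimate follows by inserting the frozen-horizon operator $\widetilde{\mathcal L}_\delta$ of Lemma~\ref{lem:frozen-consistency} as an intermediate object and applying the triangle inequality:
\[
\|\mathcal L_\delta u-(-u'')\|_{L^2(\Omega)}
\le
\|(\mathcal L_\delta-\widetilde{\mathcal L}_\delta)u\|_{L^2(\Omega)}
+
\|\widetilde{\mathcal L}_\delta u-(-u'')\|_{L^2(\Omega)} .
\]

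The first term is bounded by $C\delta^2\|u\|_{H^4(\Omega)}$ by Proposition~\ref{prop:LminusLtilde}. The second is bounded by $C\delta^2\|u\|_{H^4(\Omega)}$ by \eqref{eq:frozen-consistency}. Adding the two gives \eqref{eq:operator-consistency} with $C$ equal to the sum of the two constants.

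Before applying the triangle inequality I will check that every term is a well-defined element of $L^2(\Omega)$. Since $u\in H^4(\Omega)\hookrightarrow C^3(\overline\Omega)$, the function $-u''$ lies in $L^2$. The operator $\widetilde{\mathcal L}_\delta u$ exists as an $L^2$ limit by Lemma~\ref{lem:frozen-consistency}. For $\mathcal L_\delta u$, Proposition~\ref{prop:hard-cutoff-realization} is stated for $u\in C^2(\overline\Omega)$, and $H^4$ functions qualify, so the realized operator exists in $L^2$. This is the same realization used in Proposition~\ref{prop:LminusLtilde}, so the norms are consistent.

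The remaining point is that $C$ is independent of $u$ and $\delta$. The constant in Proposition~\ref{prop:LminusLtilde} depends only on the bounds \eqref{eq:profile-bounds-paper} from Lemma~\ref{lem:eta-properties} and on Sobolev embedding constants on $(0,1)$. The constant in Lemma~\ref{lem:frozen-consistency} depends only on $C_\beta$, on $\sup\eta_\delta\le C\delta$, and on the $L^2$ bound of the maximal operator. Both are uniform for $0<\delta<\min\{\delta_0,\bar\delta_0\}$.

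There is no real obstacle here, since all the analytic work is done in the two earlier results and this step is bookkeeping. If anything, the point I would double-check is that both propositions use the same hypothesis, Assumption~\ref{ass:second-order-localization} together with $u\in H^4\cap H^1_0$, and the same truncation convention $\varepsilon\to0$. Only then do the two limits being compared refer to the same objects.
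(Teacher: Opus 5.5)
Your proposal is correct and follows the paper's own proof: insert the frozen-horizon operator $\widetilde{\mathcal L}_\delta$, apply the triangle inequality, and bound the two pieces by Proposition~\ref{prop:LminusLtilde} and Lemma~\ref{lem:frozen-consistency}. Your extra check is sound and only makes explicit what the paper leaves implicit: the embedding $H^4(\Omega)\hookrightarrow C^3(\overline\Omega)$ places $u$ within the scope of Proposition~\ref{prop:hard-cutoff-realization}, so $\mathcal L_\delta u$ is well defined.
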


\begin{proof}
The decomposition
\[
    \mathcal L_\delta u-(-u'')
    =
    (\mathcal L_\delta-\widetilde{\mathcal L}_\delta)u
    +
    \bigl(\widetilde{\mathcal L}_\delta u-(-u'')\bigr)
\]
and the triangle inequality reduce the result to Proposition~\ref{prop:LminusLtilde} and Lemma~\ref{lem:frozen-consistency}.
\end{proof}

\subsection{Variational consistency and finite element error estimates}
\label{subsec:error-estimates}


\begin{proposition}[Uniform continuity and coercivity]
\label{prop:Bdelta-continuity-coercivity}
There exist \(c_0,C_0>0\), independent of \(\delta\), such that
\[
    |\mathcal B_\delta(u,v)|
    \le
    C_0
    \|u\|_{\mathfrak W^{\beta,2}[\delta;q](\Omega)}
    \|v\|_{\mathfrak W^{\beta,2}[\delta;q](\Omega)}
\]
for all \(u,v\in\mathfrak W_0^{\beta,2}[\delta;q](\Omega)\), and
\[
    \mathcal B_\delta(v,v)
    \ge
    c_0\|v\|_{\mathfrak W^{\beta,2}[\delta;q](\Omega)}^2
\]
for all \(v\in\mathfrak W_0^{\beta,2}[\delta;q](\Omega)\).
\end{proposition}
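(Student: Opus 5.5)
Continuity is immediate from the definitions. Since $\mathcal B_\delta(v,v)=[v]^2_{\mathfrak W^{\beta,2}[\delta;q](\Omega)}$ and the integrand of $\mathcal B_\delta$ is a product against the nonnegative measure
\[
d\mu_\delta(x,y):=w_\delta(x)\mathbf 1_{\{|y-x|<\eta_\delta(x)\}}|x-y|^{-\beta}\,dy\,dx ,
\]
the Cauchy--Schwarz inequality in $L^2(\Omega\times\Omega,\mu_\delta)$ gives
\[
|\mathcal B_\delta(u,v)|\le[u]_{\mathfrak W^{\beta,2}[\delta;q](\Omega)}[v]_{\mathfrak W^{\beta,2}[\delta;q](\Omega)}\le\|u\|_{\mathfrak W^{\beta,2}[\delta;q](\Omega)}\|v\|_{\mathfrak W^{\beta,2}[\delta;q](\Omega)} .
\]
Hence $C_0=1$, which is trivially independent of $\delta$.

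For coercivity I plan to invoke the nonlocal Poincar\'e inequality of \cite[Theorem~5.7]{scott2024nonlocal}. As recalled after \eqref{eq:nonlocal-weak-problem}, it holds uniformly for $0<\delta<\delta_0$. It gives a constant $C_P$, independent of $\delta$, with
\[
\|v\|_{L^2(\Omega)}^2\le C_P[v]^2_{\mathfrak W^{\beta,2}[\delta;q](\Omega)}\qquad\text{for all }v\in\mathfrak W_0^{\beta,2}[\delta;q](\Omega).
\]
Adding $[v]^2$ to both sides yields $\|v\|^2_{\mathfrak W^{\beta,2}[\delta;q](\Omega)}\le(1+C_P)\mathcal B_\delta(v,v)$. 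So $c_0=(1+C_P)^{-1}$ works. Assumption~\ref{ass:localization} restricts $\delta<\min\{\delta_0,\bar\delta_0\}\le\delta_0$, so the uniform range applies.

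The only real issue is checking that the Poincar\'e constant in the cited theorem is genuinely independent of $\delta$ under our normalization. The weight $w_\delta=C_\beta\eta_\delta^{-(3-\beta)}$ scales with $\delta$, and the cited result is stated for a kernel of the form $\delta q(\lambda)$. I would first verify that our seminorm matches theirs up to the fixed factor $C_\beta$ once $\eta_\delta=\delta q(\lambda)$ is substituted. The constants $\kappa_0,\kappa_1,C_q$ entering $\delta_0$ are exactly those of their hypotheses.

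If that uniformity were in doubt, the fallback is a direct comparison argument. I would show monotonicity of the seminorm in $\delta$, up to constants, using the doubling property $q(2r)\le C_qq(r)$ and a chaining (triangle-inequality) argument across nested neighborhoods. This bounds $[v]_{\delta_0/2}\le C[v]_\delta$ for $\delta<\delta_0/2$. Coercivity at the single fixed horizon $\delta_0/2$ then transfers to all smaller $\delta$, while for $\delta\in[\delta_0/2,\delta_0)$ the weights are uniformly comparable. This chaining step would be the main obstacle, and I expect the citation to make it unnecessary.
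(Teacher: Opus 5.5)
Your proof is correct and follows essentially the same route as the paper: Cauchy--Schwarz in the nonlocal energy measure for continuity (with $C_0=1$), and the $\delta$-uniform nonlocal Poincar\'e inequality of \cite[Theorem~5.7]{scott2024nonlocal} for coercivity (with $c_0=(1+C_P)^{-1}$). The chaining fallback is not needed, since the paper relies on the cited uniformity exactly as you do.
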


\begin{proof}
The first estimate follows from Cauchy--Schwarz in the nonlocal energy
measure. The second follows from the uniform nonlocal Poincar\'e
inequality recalled in Section~\ref{subsec:variational-formulation}.
\end{proof}

\begin{proposition}[Variational consistency]
\label{prop:variational-consistency}
Let \(u\in H^4(\Omega)\cap H_0^1(\Omega)\). For
\(v\in\mathfrak W_0^{\beta,2}[\delta;q](\Omega)\), define
\[
    \mathcal B_0(u,v):=(-u'',v).
\]
Then
\begin{equation}
\label{eq:variational-consistency-L2}
    |\mathcal B_\delta(u,v)-\mathcal B_0(u,v)|
    \le
    C\delta^2
    \|u\|_{H^4(\Omega)}
    \|v\|_{L^2(\Omega)},
\end{equation}
and consequently
\begin{equation}
\label{eq:variational-consistency-Tdelta}
    |\mathcal B_\delta(u,v)-\mathcal B_0(u,v)|
    \le
    C\delta^2
    \|u\|_{H^4(\Omega)}
    \|v\|_{\mathfrak W^{\beta,2}[\delta;q](\Omega)}.
\end{equation}
\end{proposition}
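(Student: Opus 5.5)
The argument reduces the variational statement to the strong operator estimate of Theorem~\ref{thm:operator-consistency} through the nonlocal Green identity of Proposition~\ref{prop:hard-cutoff-realization}.

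\textbf{Step 1: identify $\mathcal B_\delta(u,\cdot)$ with $\mathcal L_\delta u$.} Let $u\in H^4(\Omega)\cap H_0^1(\Omega)$. By the one-dimensional Sobolev embedding, $u\in C^3(\overline\Omega)\subset C^2(\overline\Omega)$. Proposition~\ref{prop:hard-cutoff-realization} therefore applies and gives, for every $v\in\mathfrak W_0^{\beta,2}[\delta;q](\Omega)$,
\[
\mathcal B_\delta(u,v)=(\mathcal L_\delta u,v).
\]
Consequently
\[
\mathcal B_\delta(u,v)-\mathcal B_0(u,v)=\bigl(\mathcal L_\delta u-(-u''),v\bigr).
\]

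The step I expect to need the most care is checking that the hypotheses of that proposition are met. It is stated for $u\in C^2(\overline\Omega)$ and for test functions $v$ in the zero-trace space, and it does not require $u$ itself to lie in the zero-trace space. So no boundary condition on $u$ is needed for the Green identity. The condition $u\in H_0^1$ only enters in Theorem~\ref{thm:operator-consistency}. If the Green identity turned out to require more, I would apply it to $v\in C_c^1(\Omega)$ first and then extend by density, using \cite[Theorem~5.6]{scott2024nonlocal}. This extension works because both sides are continuous in $v$ with respect to the $\mathfrak W^{\beta,2}$ norm. The left side is continuous since $u$ has finite seminorm: $|u(x)-u(y)|\le C|x-y|$ and $\beta<3$. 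The right side is continuous since $\mathcal L_\delta u\in L^2$.

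\textbf{Step 2: apply Cauchy--Schwarz and operator consistency.} Cauchy--Schwarz in $L^2(\Omega)$ followed by Theorem~\ref{thm:operator-consistency} gives
\[
|\mathcal B_\delta(u,v)-\mathcal B_0(u,v)|\le \|\mathcal L_\delta u+u''\|_{L^2}\|v\|_{L^2}\le C\delta^2\|u\|_{H^4}\|v\|_{L^2},
\]
which is \eqref{eq:variational-consistency-L2}. The constant is independent of $\delta$ because the constant in Theorem~\ref{thm:operator-consistency} is.

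\textbf{Step 3: pass to the energy norm.} Estimate \eqref{eq:variational-consistency-Tdelta} follows immediately from the trivial bound
\[
\|v\|_{L^2}\le\|v\|_{\mathfrak W^{\beta,2}[\delta;q](\Omega)},
\]
which holds by definition of the norm \eqref{eq:nonlocal_energynorm}.
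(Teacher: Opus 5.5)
Your proposal is correct and follows the paper's proof exactly: you use the Green identity $\mathcal B_\delta(u,v)=(\mathcal L_\delta u,v)$ from Proposition~\ref{prop:hard-cutoff-realization}, then Cauchy--Schwarz together with Theorem~\ref{thm:operator-consistency}, and finally $\|v\|_{L^2}\le\|v\|_{\mathfrak W^{\beta,2}[\delta;q](\Omega)}$. Your extra checks (the embedding $H^4\hookrightarrow C^2(\overline\Omega)$ and the density fallback) make explicit what the paper leaves implicit.
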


\begin{proof}
The nonlocal Green identity
\eqref{eq:nonlocal-green} gives
\[
    \mathcal B_\delta(u,v)=(\mathcal L_\delta u,v).
\]
Therefore
\[
    \mathcal B_\delta(u,v)-\mathcal B_0(u,v)
    =
    (\mathcal L_\delta u-(-u''),v).
\]
Cauchy--Schwarz and Theorem~\ref{thm:operator-consistency} prove
\eqref{eq:variational-consistency-L2}. The second estimate follows from
\(\|v\|_{L^2}\le\|v\|_{\mathfrak W^{\beta,2}[\delta;q]}\).
\end{proof}

The comparison of heterogeneous seminorms
\cite[Theorem~2.2]{scott2024nonlocal}, the classical-to-nonlocal
embedding \cite[Corollary~5.3]{scott2024nonlocal}, and the classical
Poincar\'e inequality imply
\begin{equation}
\label{eq:uniform-embedding}
    \|v\|_{\mathfrak W^{\beta,2}[\delta;q](\Omega)}
    \le
    C_{\rm emb}\|v\|_{H_0^1(\Omega)}
    \qquad
    \forall v\in H_0^1(\Omega),
\end{equation}
where \(C_{\rm emb}\) is independent of \(\delta\).

\begin{theorem}[Strang-type error estimate]
\label{thm:strang}
Let \(u_0\in H^2(\Omega)\cap H_0^1(\Omega)\) solve the local weak problem, and let
\(u_{\delta,h}\in V_h\subset\mathfrak W_0^{\beta,2}[\delta;q](\Omega)\) solve
\eqref{eq:discrete-weak-problem}. Then
\begin{equation}
\label{eq:strang}
\begin{aligned}
\|u_{\delta,h}-u_0\|_{\mathfrak W^{\beta,2}[\delta;q](\Omega)}
\le C\Bigg(
&\inf_{v_h\in V_h}
\|u_0-v_h\|_{\mathfrak W^{\beta,2}[\delta;q](\Omega)}\\
&+
\sup_{0\ne w_h\in V_h}
\frac{
|\mathcal B_\delta(u_0,w_h)-\mathcal B_0(u_0,w_h)|
}{
\|w_h\|_{\mathfrak W^{\beta,2}[\delta;q](\Omega)}
}
\Bigg),
\end{aligned}
\end{equation}
where \(C\) is independent of \(h\) and \(\delta\).
\end{theorem}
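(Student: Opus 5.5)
The plan is the standard second Strang lemma argument, carried out in the nonlocal energy norm with $\delta$-uniform constants from Proposition~\ref{prop:Bdelta-continuity-coercivity}. We need $u_0\in\mathfrak W_0^{\beta,2}[\delta;q](\Omega)$ so that norms of $u_0-v_h$ make sense. This follows from \eqref{eq:uniform-embedding}, since $u_0\in H_0^1(\Omega)$. We also need that $V_h\subset\mathfrak W_0^{\beta,2}[\delta;q](\Omega)$, which was noted in Section~\ref{subsec:fem-discretization}.

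Fix an arbitrary $v_h\in V_h$ and set $w_h:=u_{\delta,h}-v_h\in V_h$. By coercivity,
\[
c_0\|w_h\|^2_{\mathfrak W^{\beta,2}[\delta;q](\Omega)}\le\mathcal B_\delta(u_{\delta,h}-v_h,w_h)
=\mathcal B_\delta(u_{\delta,h}-u_0,w_h)+\mathcal B_\delta(u_0-v_h,w_h).
\]
For the first term we use the discrete equation \eqref{eq:discrete-weak-problem}, $\mathcal B_\delta(u_{\delta,h},w_h)=(f,w_h)$. We then use the local problem tested against $w_h$. Since $u_0\in H^2(\Omega)$ solves $-u_0''=f$, we have $(f,w_h)=(-u_0'',w_h)=\mathcal B_0(u_0,w_h)$ for all $w_h\in V_h\subset L^2(\Omega)$. (If the local weak problem is posed in $H_0^1$, this is obtained by integration by parts, because $w_h\in H_0^1$.) Hence
\[
\mathcal B_\delta(u_{\delta,h}-u_0,w_h)=\mathcal B_0(u_0,w_h)-\mathcal B_\delta(u_0,w_h).
\]
Dividing by $\|w_h\|$, the first term is bounded by the consistency supremum in \eqref{eq:strang}. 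The second term is bounded, using continuity with constant $C_0$, by $C_0\|u_0-v_h\|\,\|w_h\|$. This gives
\[
\|w_h\|\le c_0^{-1}\Bigl(C_0\|u_0-v_h\|+\sup_{0\ne w_h\in V_h}\frac{|\mathcal B_\delta(u_0,w_h)-\mathcal B_0(u_0,w_h)|}{\|w_h\|}\Bigr).
\]

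The triangle inequality $\|u_{\delta,h}-u_0\|\le\|u_0-v_h\|+\|w_h\|$ then yields the estimate with $C=\max\{1+C_0/c_0,\,1/c_0\}$. Taking the infimum over $v_h$ gives \eqref{eq:strang}. The constant is independent of $h$ and $\delta$ because $c_0$ and $C_0$ are.

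The only delicate point is the identification $(f,w_h)=\mathcal B_0(u_0,w_h)$ together with the membership $u_0\in\mathfrak W_0^{\beta,2}[\delta;q]$. Both rely on the $H^2\cap H_0^1$ regularity of $u_0$ and on \eqref{eq:uniform-embedding}. Once these hold, the argument is algebraic.
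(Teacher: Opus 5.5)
Your proposal is correct and follows essentially the same argument as the paper: fix $v_h$, set $u_{\delta,h}-v_h$ as the discrete test function, use the discrete equation together with $(f,\cdot)=\mathcal B_0(u_0,\cdot)$ to isolate the consistency residual, then apply uniform coercivity and continuity, the triangle inequality, and the infimum over $v_h$. Your explicit check that $u_0\in\mathfrak W_0^{\beta,2}[\delta;q](\Omega)$ via \eqref{eq:uniform-embedding} makes visible a membership the paper uses only tacitly. Before dividing by $\|w_h\|$, you should also dispose of the trivial case $w_h=0$, as the paper does.
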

\begin{proof}
Since
\[
    u_0\in H^2(\Omega)\cap H_0^1(\Omega)
    \qquad\text{and}\qquad
    -u_0''=f
    \quad\text{in }L^2(\Omega),
\]
we extend the first argument of the local form by setting
\[
    \mathcal B_0(u_0,w)
    :=
    (-u_0'',w)_{L^2(\Omega)}
    =
    (f,w)_{L^2(\Omega)},
    \qquad
    w\in\mathfrak W_0^{\beta,2}[\delta;q](\Omega).
\]
This agrees with the usual local bilinear form whenever
\(w\in H_0^1(\Omega)\).

Let \(v_h\in V_h\) be arbitrary and set \(
    z_h:=u_{\delta,h}-v_h\in V_h.
\)
Using the discrete problem and the identity
\(\mathcal B_0(u_0,z_h)=(f,z_h)\), we obtain
\[
\begin{aligned}
    \mathcal B_\delta(z_h,z_h)
    &=
    \mathcal B_\delta(u_{\delta,h},z_h)
    -
    \mathcal B_\delta(v_h,z_h)       =
    (f,z_h)
    -
    \mathcal B_\delta(v_h,z_h)                                      \\
    &=
    \mathcal B_0(u_0,z_h)
    -
    \mathcal B_\delta(v_h,z_h)                                =
    \mathcal B_0(u_0,z_h)
    -
    \mathcal B_\delta(u_0,z_h)
    +
    \mathcal B_\delta(u_0-v_h,z_h).
\end{aligned}
\]

Define the consistency functional
\[
    R_h(w_h)
    :=
    \mathcal B_\delta(u_0,w_h)
    -
    \mathcal B_0(u_0,w_h),
    \qquad
    w_h\in V_h.
\]
It follows that
\[
    \mathcal B_\delta(z_h,z_h)
    =
    -R_h(z_h)
    +
    \mathcal B_\delta(u_0-v_h,z_h).
\]
By the uniform coercivity and continuity of
\(\mathcal B_\delta\),
\[
\begin{aligned}
    c_0\|z_h\|_{\mathfrak W^{\beta,2}[\delta;q](\Omega)}^2
    &\le
    \mathcal B_\delta(z_h,z_h)     \le
    |R_h(z_h)|
    +
    C_0
    \|u_0-v_h\|_{\mathfrak W^{\beta,2}[\delta;q](\Omega)}
    \|z_h\|_{\mathfrak W^{\beta,2}[\delta;q](\Omega)}.
\end{aligned}
\]
Moreover,
\[
    |R_h(z_h)|
    \le
    \left(
        \sup_{0\ne w_h\in V_h}
        \frac{
            \left|
                \mathcal B_\delta(u_0,w_h)
                -
                \mathcal B_0(u_0,w_h)
            \right|
        }{
            \|w_h\|_{\mathfrak W^{\beta,2}[\delta;q](\Omega)}
        }
    \right)
    \|z_h\|_{\mathfrak W^{\beta,2}[\delta;q](\Omega)}.
\]
If \(z_h\ne0\), division by
\(\|z_h\|_{\mathfrak W^{\beta,2}[\delta;q](\Omega)}\) therefore gives
\begin{equation}
\label{eq:strang-zh-bound}
\begin{aligned}
    \|z_h\|_{\mathfrak W^{\beta,2}[\delta;q](\Omega)}
    \le C\Bigg(
        \|u_0-v_h\|_{\mathfrak W^{\beta,2}[\delta;q](\Omega)}
        +
        \sup_{0\ne w_h\in V_h}
        \frac{
            \left|
                \mathcal B_\delta(u_0,w_h)
                -
                \mathcal B_0(u_0,w_h)
            \right|
        }{
            \|w_h\|_{\mathfrak W^{\beta,2}[\delta;q](\Omega)}
        }
    \Bigg),
\end{aligned}
\end{equation}
where \(C>0\) depends only on the uniform continuity and
coercivity constants \(C_0\) and \(c_0\). The case \(z_h=0\)
is immediate. Finally, the triangle inequality yields
\begin{equation}
\label{eq:strang-triangle}
\begin{aligned}
    \|u_{\delta,h}-u_0\|_{\mathfrak W^{\beta,2}[\delta;q](\Omega)}
    &\le
    \|u_{\delta,h}-v_h\|_{\mathfrak W^{\beta,2}[\delta;q](\Omega)}
    +
    \|u_0-v_h\|_{\mathfrak W^{\beta,2}[\delta;q](\Omega)}                          \\
    &=
    \|z_h\|_{\mathfrak W^{\beta,2}[\delta;q](\Omega)}
    +
    \|u_0-v_h\|_{\mathfrak W^{\beta,2}[\delta;q](\Omega)}.
\end{aligned}
\end{equation}
Combining \eqref{eq:strang-zh-bound} and
\eqref{eq:strang-triangle}, and then taking the infimum over
\(v_h\in V_h\), proves
\[
\begin{aligned}
    \|u_{\delta,h}-u_0\|_{\mathfrak W^{\beta,2}[\delta;q](\Omega)}
    \le C\Bigg(
        \inf_{v_h\in V_h}
        \|u_0-v_h\|_{\mathfrak W^{\beta,2}[\delta;q](\Omega)}
        \\+
        \sup_{0\ne w_h\in V_h}
        \frac{
            \left|
                \mathcal B_\delta(u_0,w_h)
                -
                \mathcal B_0(u_0,w_h)
            \right|
        }{
            \|w_h\|_{\mathfrak W^{\beta,2}[\delta;q](\Omega)}
        }
    \Bigg).
\end{aligned}
\]
\end{proof}

\begin{corollary}[Energy and \(L^2\) error estimates]
\label{cor:energy-total}
Let \(u_0\in H^4(\Omega)\cap H_0^1(\Omega)\) be the local solution. Then
\begin{equation}
\label{eq:energy-total}
    \|u_{\delta,h}-u_0\|_{\mathfrak W^{\beta,2}[\delta;q](\Omega)}
    \le
    C(h+\delta^2)\|u_0\|_{H^4(\Omega)},
\end{equation}
and
\begin{equation}
\label{eq:L2-total}
    \|u_{\delta,h}-u_0\|_{L^2(\Omega)}
    \le
    C(h+\delta^2)\|u_0\|_{H^4(\Omega)}.
\end{equation}
\end{corollary}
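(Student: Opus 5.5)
The plan is to combine the Strang-type estimate of Theorem~\ref{thm:strang} with an interpolation bound for the best-approximation term and the variational consistency of Proposition~\ref{prop:variational-consistency} for the consistency term. The $L^2$ estimate \eqref{eq:L2-total} will then follow trivially from the energy estimate, since $\|\cdot\|_{L^2(\Omega)}\le\|\cdot\|_{\mathfrak W^{\beta,2}[\delta;q](\Omega)}$ by the definition \eqref{eq:nonlocal_energynorm}. No duality argument is needed, because the claimed $L^2$ rate is the same $h+\delta^2$.

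For the approximation term, we take $v_h=I_hu_0$, the nodal piecewise-linear interpolant. Because $u_0\in H_0^1(\Omega)$ is continuous with $u_0(0)=u_0(1)=0$, we have $I_hu_0\in V_h$, and hence $u_0-I_hu_0\in H_0^1(\Omega)$. The uniform embedding \eqref{eq:uniform-embedding} then gives
\[
\|u_0-I_hu_0\|_{\mathfrak W^{\beta,2}[\delta;q](\Omega)}\le C_{\rm emb}\|u_0-I_hu_0\|_{H_0^1(\Omega)}\le C h\|u_0''\|_{L^2(\Omega)}\le Ch\|u_0\|_{H^4(\Omega)},
\]
where the middle step is the classical interpolation estimate for $P_1$ elements in one dimension. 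The constant is independent of $\delta$ because $C_{\rm emb}$ is. This is the step I regard as the real content: everything relies on \eqref{eq:uniform-embedding} being $\delta$-uniform, which in turn rests on the cited comparison and embedding results from \cite{scott2024nonlocal}.

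For the consistency term, we apply \eqref{eq:variational-consistency-Tdelta} with $u=u_0\in H^4(\Omega)\cap H_0^1(\Omega)$ and $v=w_h\in V_h\subset\mathfrak W_0^{\beta,2}[\delta;q](\Omega)$. This bounds the supremum in \eqref{eq:strang} by $C\delta^2\|u_0\|_{H^4(\Omega)}$. Inserting both bounds into \eqref{eq:strang} yields \eqref{eq:energy-total}, with all constants independent of $h$ and $\delta$.

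The only point requiring care is a consistency check on $\mathcal B_0$. Theorem~\ref{thm:strang} uses $\mathcal B_0(u_0,w)=(-u_0'',w)=(f,w)$, which matches the definition of $\mathcal B_0$ in Proposition~\ref{prop:variational-consistency}, so the two results combine directly. The hypotheses of that proposition hold under Assumption~\ref{ass:second-order-localization}, which underlies Theorem~\ref{thm:operator-consistency}. I expect no real obstacle beyond confirming that $I_hu_0$ satisfies the homogeneous boundary conditions, which it does.
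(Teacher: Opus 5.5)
Your proposal is correct and follows the paper's proof essentially verbatim. You bound the best-approximation term in Theorem~\ref{thm:strang} with the nodal interpolant via the $\delta$-uniform embedding \eqref{eq:uniform-embedding}, bound the consistency term by \eqref{eq:variational-consistency-Tdelta}, and deduce \eqref{eq:L2-total} from $\|v\|_{L^2(\Omega)}\le\|v\|_{\mathfrak W^{\beta,2}[\delta;q](\Omega)}$.
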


\begin{proof}
Let \(I_hu_0\in V_h\) be the nodal interpolant. By
\eqref{eq:uniform-embedding} and the standard piecewise-linear
interpolation estimate,
\[
\begin{aligned}
\inf_{v_h\in V_h}
\|u_0-v_h\|_{\mathfrak W^{\beta,2}[\delta;q]}
&\le
\|u_0-I_hu_0\|_{\mathfrak W^{\beta,2}[\delta;q]}\le
C\|u_0-I_hu_0\|_{H_0^1}
\le
Ch\|u_0\|_{H^2}.
\end{aligned}
\]
The residual term in Theorem~\ref{thm:strang} is bounded by
Proposition~\ref{prop:variational-consistency}. This proves
\eqref{eq:energy-total}. Estimate \eqref{eq:L2-total} follows from
\(\|v\|_{L^2}\le\|v\|_{\mathfrak W^{\beta,2}[\delta;q]}\).
\end{proof}
\begin{remark}[Classical error norms]
\label{rem:improved-L2-and-classical-norms}
The estimate in Corollary~\ref{cor:energy-total} is not expected to be sharp. Indeed, the numerical experiments below exhibit higher convergence rates in the classical \(H^1\)- and \(L^2\)-norms than those predicted by the present analysis. In particular, an \(L^2\)-estimate of order \(O(h^2+\delta^2)\) would require additional regularity theory for the associated nonlocal dual problem, which is beyond the scope of this paper. These improved rates should therefore be regarded as numerical observations rather than consequences of Corollary~\ref{cor:energy-total}.
\end{remark}
\begin{remark}{Asymptotic compatibility}
For sufficiently regular local solutions, estimate \eqref{eq:L2-total} immediately implies that, for every pair of
sequences \(\delta_n\rightarrow0\) and \(h_n\rightarrow0\),
\[
    \|u_{\delta_n,h_n}-u_0\|_{L^2(0,1)}
    \longrightarrow0.
\]
Thus the method recovers the local solution along every path
\((\delta,h)\to(0,0)\) for
\(u_0\in H^4(0,1)\cap H_0^1(0,1)\).
\end{remark}

\section{Numerical experiments}
\label{sec:experiments}
We present numerical experiments to validate the preceding analysis and illustrate the behavior of the numerical solutions in the local-limit regime.  The reference solution \(u_0\) solves the local Dirichlet problem
\begin{equation}\label{experiment1}
\begin{cases}
    - u''(x) = f(x)& \text{in } (0, 1), \\
    u(0) = u(1) = 0.
\end{cases}
\end{equation}
The experiments are organized into two examples. Example~1 investigates the boundary behavior and convergence under the coupled scalings \(\delta=h\) and \(\delta=\sqrt{h}\), alongside additional tests with fixed nonlocal horizon $\delta$ and singularity parameter $\beta$. Example~2 provides an additional boundary-sensitive test in which the exact solution has nonzero slopes at both endpoints.

As described in Section~3, singular inner moments in the stiffness matrix are evaluated analytically, whereas remaining outer integrals are computed using a 4-point Gauss--Legendre rule. Standard $L^2$- and $H^1$-error norms are evaluated elementwise with a 3-point Gauss--Legendre rule. For the nonlocal energy norm , outer and inner integrals in $\mathcal B_\delta(e_{\delta,h},e_{\delta,h})$ are computed using 4- and 4-point Gauss--Legendre rules, respectively, after splitting at $y=x$.

Convergence rates are computed by $\operatorname{order} = \log_2(E_{h_k}/E_{h_{k+1}})$ under uniform mesh refinement $h_{k+1}=h_k/2$. Elementwise derivatives $u_{\delta,h}'$ of the piecewise-linear solution are assigned right-element values at interior nodes for visualization only, without affecting integrated error norms.

\subsection{Example 1}
The first experiment adopts the same benchmark problem as that considered in~\cite{tian2013analysis}.
The corresponding local exact solution and source term are \(u_0(x)=x^2(1-x^2),\; f(x)=12x^2-2.\) For the theoretical convergence tests, we use the smooth localization profile
introduced in Section~\ref{sec:analysis} as 
$q(r)=r-1+e^{-r}$ and $\lambda(x)=x(1-x)$, so that
\begin{equation}
\eta_\delta(x)=\delta q(\lambda(x))=\delta(x(1-x)-1+e^{-x(1-x)}).
\end{equation} 
We first compare the near-boundary behavior of  the heterogeneous localization model and the classical nonlocal model equipped with the homogeneous exterior volume constraint. Both models are discretized by continuous piecewise linear finite  element method. Figure~\ref{fig:ex1_comparison} displays the numerical derivatives and pointwise errors for $h=2^{-8}$ and $\delta=h$. The classical nonlocal formulation exhibits visible boundary effects in the numerical derivative. In contrast, the heterogeneous localization model produces a smoother boundary profile and smaller near-boundary errors. This comparison provides an initial illustration of the boundary-regularizing effect of heterogeneous localization.
\begin{figure}[!htbp]
    \centering
    \begin{minipage}[t]{0.45\textwidth}
        \centering
        \includegraphics[width=\textwidth]{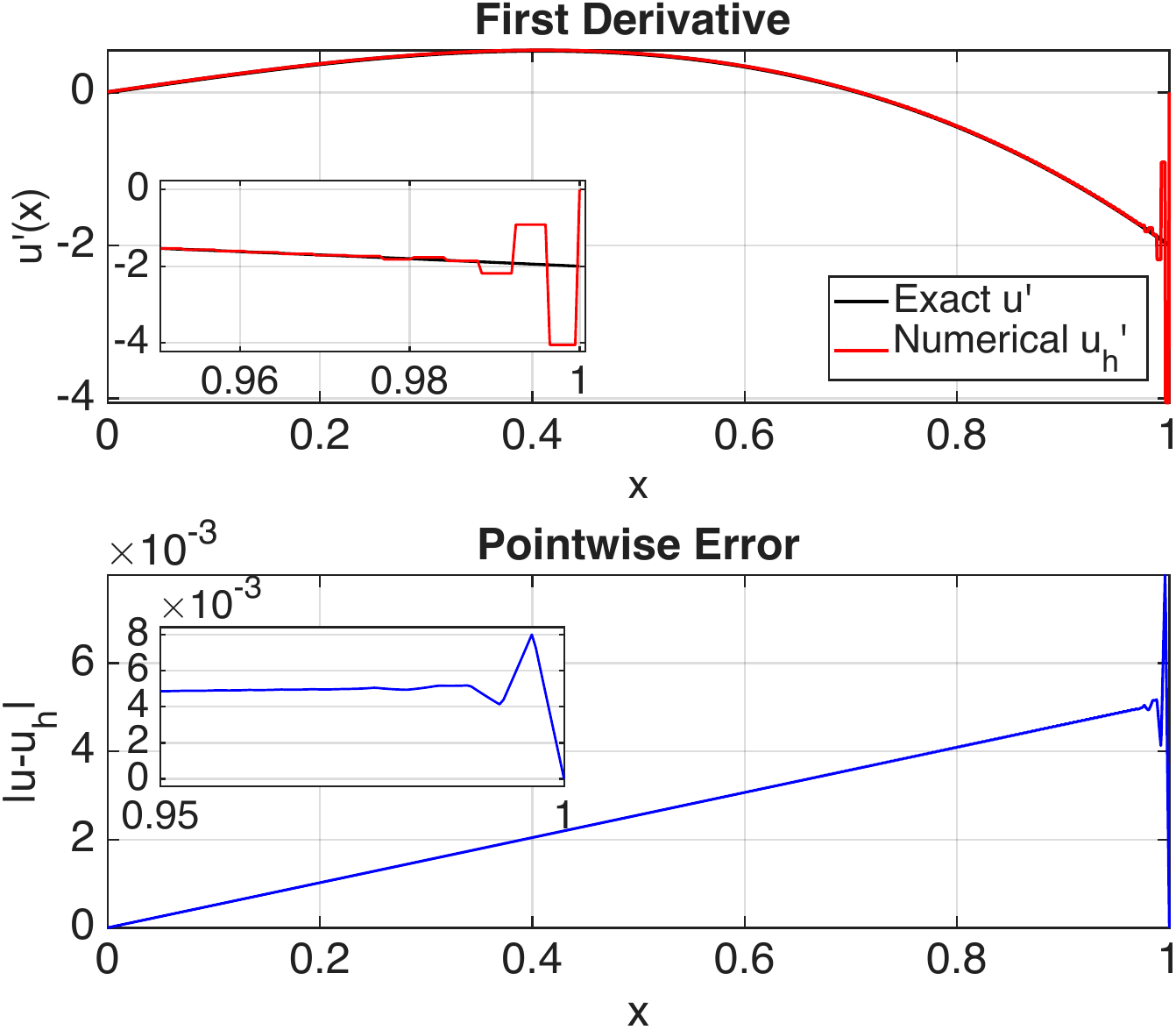} 
    \end{minipage}
    \hfill
    \begin{minipage}[t]{0.46\textwidth}
        \centering
        \includegraphics[width=\textwidth]{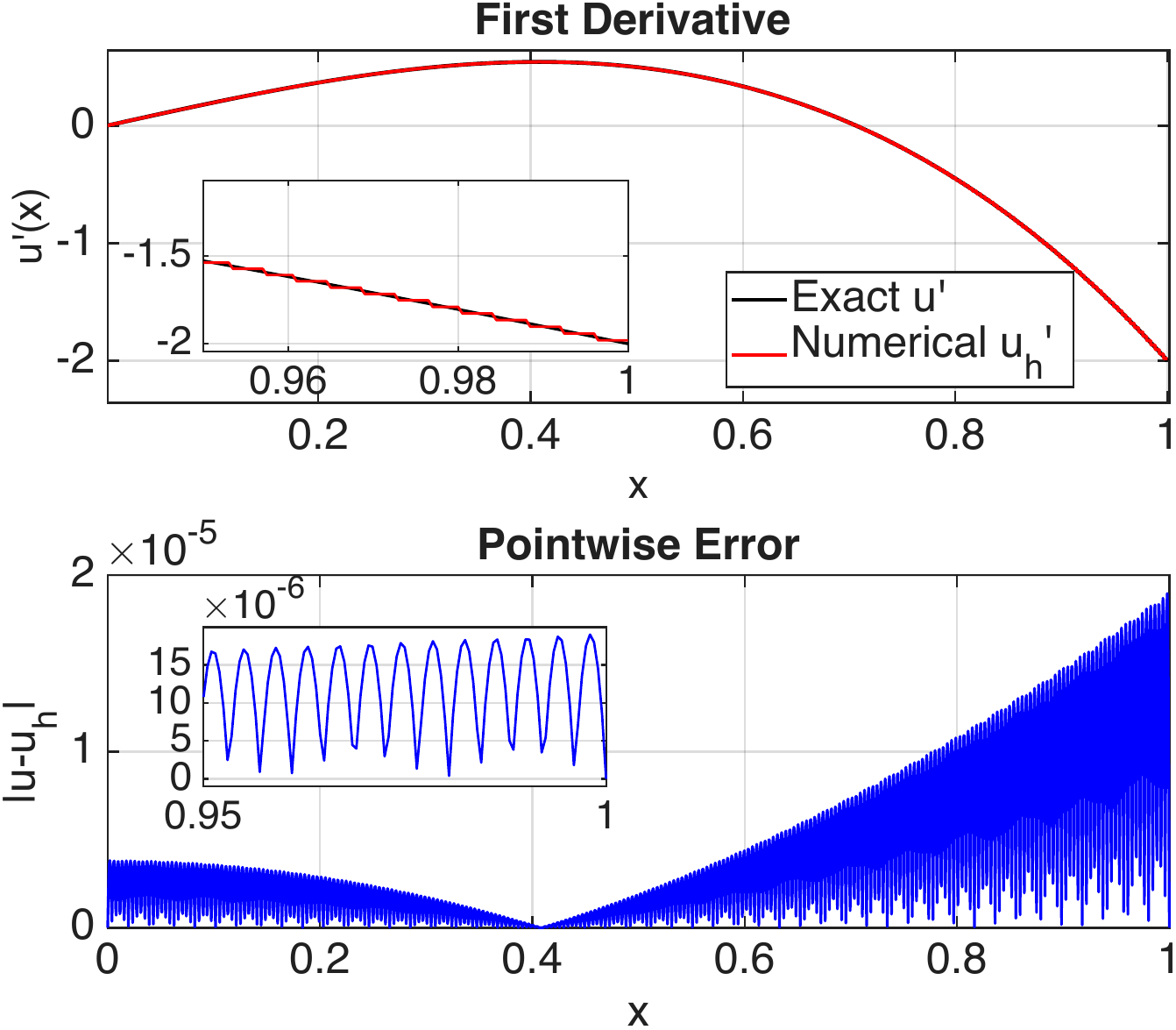} 
    \end{minipage}
    \caption{Comparison of numerical solution derivatives for traditional nonlocal model (left) and the proposed model (right) in Example 1.}
    \label{fig:ex1_comparison}
\end{figure}

Next, we test the convergence behavior under the coupled scalings $\delta=h$ and $\delta=\sqrt{h}$. Tables~\ref{tab:ex1_errors_delta_h} and~\ref{tab:ex1_errors_delta_h2} show the errors and convergence orders for $\delta=h$ and $\delta=\sqrt{h}$. The nonlocal energy errors agree with the first-order estimate in Section~\ref{sec:analysis} with the nonlocal energy norm defined in \eqref{eq:nonlocal_energynorm}. We also include the classical \(L^2\)- and \(H^1\)-errors as reference quantities for comparison with standard finite element behavior. To explore the influence of the singularity parameter $\beta$, we supplement results with $\beta=2.5$ and $\delta=h$ in Table \ref{tab:ex1_errors_delta_h_beta25}. To ensure that the interaction neighborhood spans more than one mesh cell, we additionally consider the scaling $\delta=40h$. For the localization profile used here, \(m_h=2.\) The observed convergence rates in  Table \ref{tab:ex1_errors_delta_2} remain consistent with the theoretical estimate, indicating that the behavior is not restricted to the single-cell interaction regime. Finally, Table \ref{tab:ex1_errors_delta_fixed} reports convergence data when the nonlocal horizon is fixed at $\delta=\frac{1}{2^5}$ independent of mesh size $h$.
\begin{table}[htbp]
\centering
\caption{\emph{Errors and convergence orders with $\delta=h$ and $\beta=1$.}}
\label{tab:ex1_errors_delta_h}
\setlength{\tabcolsep}{5.7pt}
\begin{tabular}{ccccccc}
\hline
$h$ & $L^2$-error & Order
& $H^1$-error & Order
& \shortstack{Nonlocal error}& Order \\
\hline
$2^{-5}$& $3.65\times10^{-4}$ & --
& $3.73\times10^{-2}$ & --
& $3.39\times10^{-2}$ & -- \\

$2^{-6}$& $9.13\times10^{-5}$ & 2.00
& $1.86\times10^{-2}$ & 1.01
& $1.76\times10^{-2}$ & 0.95 \\

$2^{-7}$& $2.28\times10^{-5}$ & 2.00
& $9.27\times10^{-3}$ & 1.00
& $7.49\times10^{-3}$ & 1.23 \\

$2^{-8}$& $5.71\times10^{-6}$ & 2.00
& $4.63\times10^{-3}$ & 1.00
& $3.83\times10^{-3}$ & 0.97 \\

$2^{-9}$& $1.43\times10^{-6}$ & 2.00
& $2.31\times10^{-3}$ & 1.00
& $2.03\times10^{-3}$ & 0.92 \\

$2^{-10}$& $3.57\times10^{-7}$ & 2.00
& $1.16\times10^{-3}$ & 1.00
& $1.00\times10^{-3}$ & 1.02 \\
\hline
\end{tabular}
\end{table}
\begin{table}[htbp]
\centering
\caption{\emph{Errors and convergence orders with $\delta = \sqrt{h}$ and $\beta = 1$.}}
\label{tab:ex1_errors_delta_h2}
\setlength{\tabcolsep}{5.7pt}
\begin{tabular}{ccccccc}
\hline
$h$ & $L^2$-error & Order & $H^1$-error & Order & Nonlocal error & Order \\
\hline
$2^{-5}$& $3.65\times10^{-4}$ & --   & $3.37 \times 10^{-2}$& --   & $2.41 \times 10^{-1}$& --   \\
$2^{-6}$& $9.13\times10^{-5}$ & 2.00& $1.86 \times 10^{-2}$& 1.01& $1.12 \times 10^{-1}$& 1.10\\
$2^{-7}$& $2.23\times10^{-5}$& 2.03& $9.26 \times 10^{-3}$& 1.00& $4.13 \times 10^{-2}$& 1.44\\
$2^{-8}$& $5.36\times10^{-6}$& 2.05& $4.63 \times 10^{-3}$& 1.00& $3.77 \times 10^{-2}$ & 1.01\\
$2^{-9}$& $1.31\times10^{-6}$& 2.03& $2.31 \times 10^{-3}$& 1.00& $1.02 \times 10^{-2}$& 0.99\\
$2^{-10}$& $3.29\times10^{-7}$& 1.99& $1.11 \times 10^{-3}$& 1.00 & $4.32 \times 10^{-3}$& 1.24\\
\hline
\end{tabular}
\end{table}
\begin{table}[htbp]
\centering
\caption{\emph{Errors and convergence orders with $\delta=h$ and $\beta=2.5$.}}
\label{tab:ex1_errors_delta_h_beta25}
\setlength{\tabcolsep}{5.7pt}
\begin{tabular}{ccccccc}
\hline
$h$ & $L^2$-error & Order
& $H^1$-error & Order
& \shortstack{Nonlocal error}& Order \\
\hline
$2^{-5}$& $3.65\times10^{-4}$ & --
& $3.73\times10^{-2}$ & --
& $3.42\times10^{-2}$ & -- \\

$2^{-6}$& $9.13\times10^{-5}$ & 2.00
& $1.86\times10^{-2}$ & 1.01
& $1.76\times10^{-2}$ & 0.96 \\

$2^{-7}$& $2.28\times10^{-5}$ & 2.00
& $9.27\times10^{-3}$ & 1.00
& $7.51\times10^{-3}$ & 1.23 \\

$2^{-8}$& $5.71\times10^{-6}$ & 2.00
& $4.63\times10^{-3}$ & 1.00
& $3.70\times10^{-3}$ & 1.02 \\

$2^{-9}$& $1.43\times10^{-6}$ & 2.00
& $2.31\times10^{-3}$ & 1.00
& $1.94\times10^{-3}$ & 0.93 \\

$2^{-10}$& $3.57\times10^{-7}$ & 2.00
& $1.16\times10^{-3}$ & 1.00
& $9.66\times10^{-4}$ & 1.01 \\
\hline
\end{tabular}
\end{table}
\begin{table}[htbp]
\centering
\caption{\emph{Errors and convergence orders with $\delta=40h$ and $\beta=1$.}}
\label{tab:ex1_errors_delta_2}
\setlength{\tabcolsep}{5.7pt}
\begin{tabular}{ccccccc}
\hline
$h$ & $L^2$-error & Order
& $H^1$-error & Order
& \shortstack{Nonlocal error}& Order \\
\hline
$2^{-5}$& $9.10\times10^{-4}$& --
& $3.73\times10^{-2}$ & --
& $1.25\times10^{-2}$& -- \\

$2^{-6}$& $2.28\times10^{-4}$& 2.00
& $1.86\times10^{-2}$ & 1.01
& $5.74\times10^{-3}$& 1.12\\

$2^{-7}$& $5.71\times10^{-5}$& 1.99& $9.27\times10^{-3}$ & 1.00
& $2.62\times10^{-3}$& 1.13\\

$2^{-8}$& $1.43\times10^{-5}$& 2.00
& $4.63\times10^{-3}$ & 1.00
& $1.38\times10^{-3}$& 0.92\\

$2^{-9}$& $3.57\times10^{-6}$& 2.00& $2.31\times10^{-3}$ & 1.00
& $7.38\times10^{-4}$& 0.92 \\

$2^{-10}$& $8.92\times10^{-7}$& 2.00& $1.16\times10^{-3}$ & 0.99& $3.95\times10^{-4}$& 0.90\\
\hline
\end{tabular}
\end{table}
\begin{table}[htbp]
\centering
\caption{\emph{Errors and convergence orders with $\delta = \frac{1}{2^5}$ and $\beta = 1$.}}
\label{tab:ex1_errors_delta_fixed}
\setlength{\tabcolsep}{5.7pt}
\begin{tabular}{ccccccc}
\hline
$h$ & $L^2$-error & Order & $H^1$-error & Order & Nonlocal error & Order \\
\hline
$2^{-5}$& $3.65 \times 10^{-4}$ & -- & $3.73 \times 10^{-2}$ & -- & $3.40 \times 10^{-2}$ & -- \\
$2^{-6}$& $9.13 \times 10^{-5}$ & 2.00 & $1.86 \times 10^{-2}$ & 1.01 & $1.59 \times 10^{-2}$ & 1.09 \\
$2^{-7}$& $2.28 \times 10^{-5}$ & 2.00 & $9.27 \times 10^{-3}$ & 1.00 & $5.21 \times 10^{-3}$ & 1.61 \\
$2^{-8}$& $5.71 \times 10^{-6}$ & 2.00 & $4.63 \times 10^{-3}$ & 1.00 & $2.44 \times 10^{-3}$ & 1.09 \\
$2^{-9}$& $1.34 \times 10^{-6}$ & 2.09 & $2.31 \times 10^{-3}$ & 1.00 & $1.14 \times 10^{-3}$ & 1.10 \\
$2^{-10}$& $3.29 \times 10^{-7}$ & 2.03 & $1.16 \times 10^{-3}$ & 1.00 & $4.32 \times 10^{-4}$ & 1.40 \\
\hline
\end{tabular}
\end{table}

\subsection{Example 2}
\label{subsec:u_exact2}
The second experiment uses the local benchmark solution $u_0(x)=e^{(x (1 - x))}  \sin(\pi x)$, with \(f(x)=e^{x (1 - x) } ((\pi^2 + 2 - (1 - 2x)^2)\sin(\pi x) - 2\pi(1 - 2x)\cos(\pi x))\). We compare the numerical derivatives and pointwise errors of the classical nonlocal model and the heterogeneous localization model. The fixed-horizon solution exhibits boundary-localized discrepancies at both endpoints, whereas the localized model yields a smoother derivative profile and reduced near-boundary errors.
\begin{figure}[!htbp]
    \centering
    \begin{minipage}[t]{0.45\textwidth}
        \centering
        \includegraphics[
            width=1\textwidth,
            trim=0 5.5cm 62 6cm,
            clip
        ]{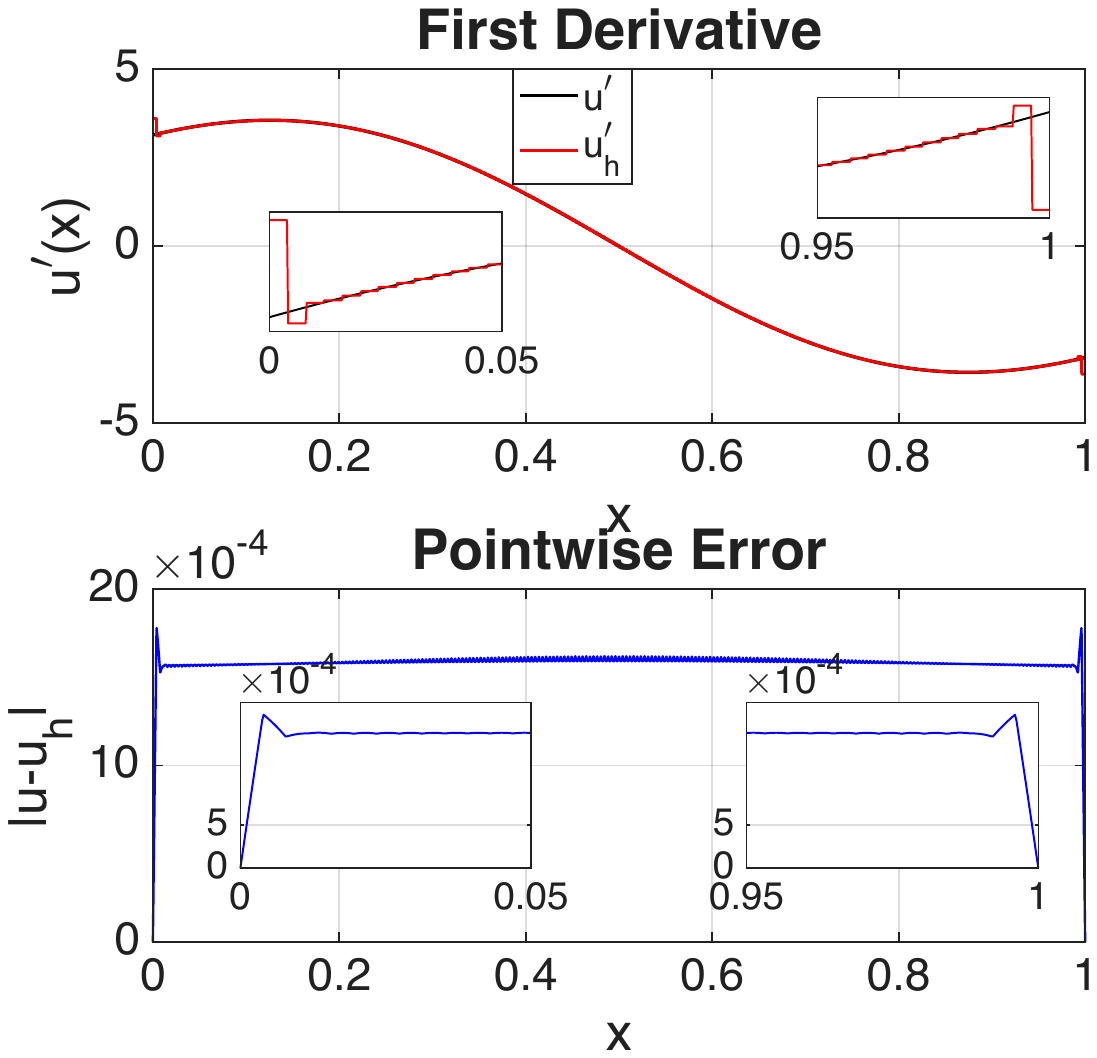}
    \end{minipage}
    \hfill
    \begin{minipage}[t]{0.46\textwidth}
        \centering
        \includegraphics[
            width=1\textwidth,
            trim=1.5 5.5cm 62 6cm,
            clip
        ]{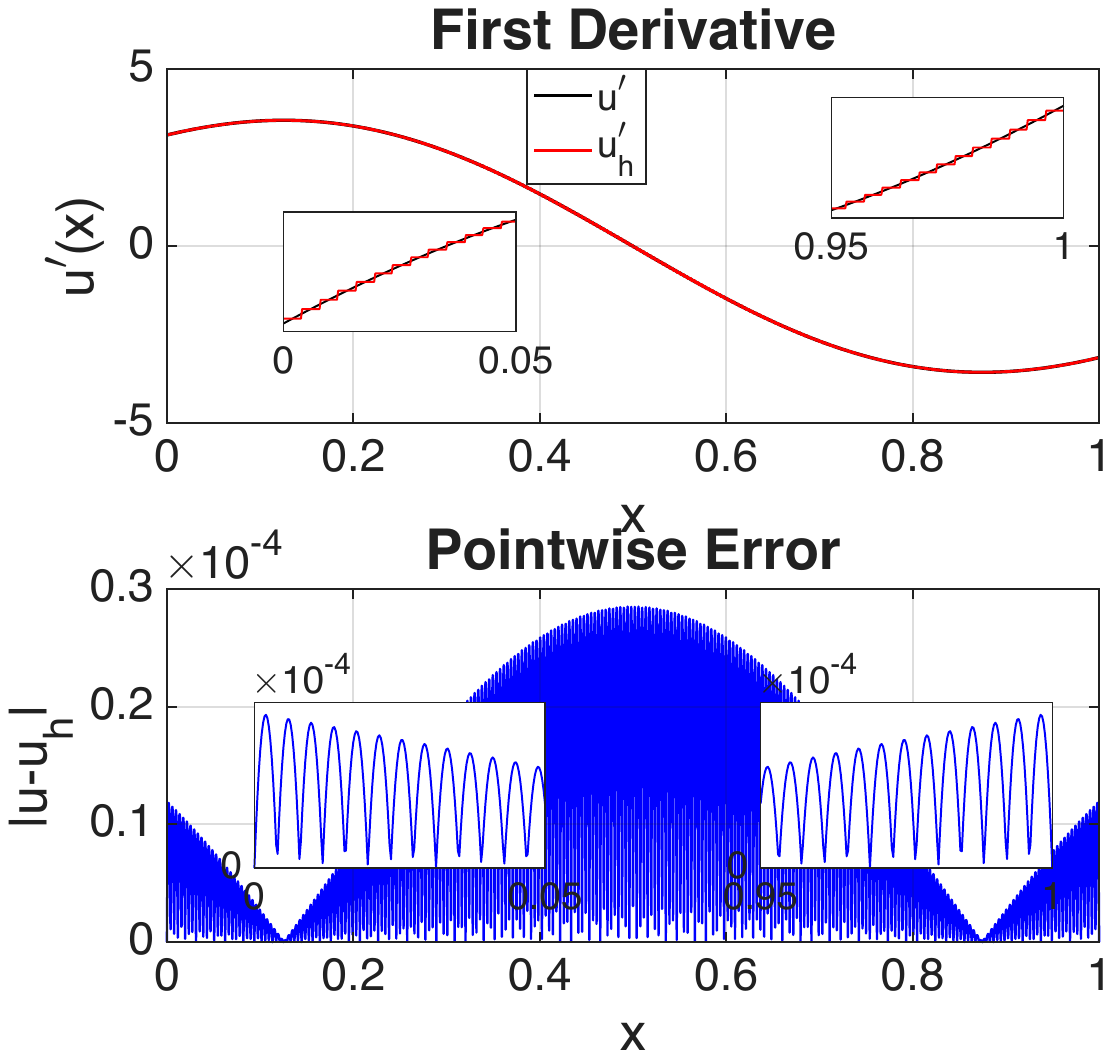}
    \end{minipage}
    \caption{Comparison of numerical solution derivatives for the traditional
    nonlocal model (left) and the proposed heterogeneous localization model
    (right) in Example~2.}
    \label{fig:ex2_comparison}
\end{figure}

\section{Conclusions}
\label{sec:conclusions}
This paper developed and analyzed a conforming finite element method for a one-dimensional nonlocal Poisson problem with heterogeneous localization and homogeneous local Dirichlet boundary conditions. The method is based on a geometric decomposition of spatially varying interaction regions, which enables a semi-analytical assembly of the singular-kernel stiffness matrix. Under suitable smoothness assumptions on the localization profile, we established second-order consistency of the realized nonlocal operator with the local Dirichlet Laplacian, derived the corresponding variational consistency estimate, and obtained the finite element error bound. Numerical experiments confirm the expected convergence behavior and show reduced near-boundary discrepancies compared with the volume-constraint formulation.

Future work includes extensions to different local boundary conditions, as well as to higher-dimensional geometries, where
the interaction-region geometry and matrix assembly become more challenging.
The proposed geometric assembly strategy may also serve as a building block for
more general nonlocal and peridynamic discretizations with spatially varying
interaction ranges.

\bibliographystyle{siamplain}
\bibliography{references}
\end{document}